\documentclass[a4paper,reqno,11pt]{amsart}
\usepackage{amsmath,amssymb}
\usepackage[top=25mm,right=25mm,bottom=25mm,left=25mm]{geometry}
\usepackage[hypertexnames=false,hidelinks]{hyperref}

\newcommand\F{\mathbb{F}}
\newcommand\GL{\operatorname{GL}}
\newcommand\AGL{\operatorname{AGL}}
\newcommand\diam{\operatorname{diam}}
\newcommand\Symgrp{\operatorname{Sym}}

\newtheorem{thm}{Theorem}[section]
\newtheorem{prop}[thm]{Proposition}
\newtheorem{lem}[thm]{Lemma}
\newtheorem{conj}[thm]{Conjecture}
\newtheorem{ques}[thm]{Question}
\numberwithin{equation}{section}

\begin{document}

\title[Common neighbour conjectures for Saxl graphs fail at every base size]{Common neighbour conjectures for Saxl graphs fail at every base size}

\author[Aluna Rizzoli]{Aluna Rizzoli}
\address{Department of Mathematics, King's College London, Strand, London WC2R 2LS, United Kingdom}
\address{Heilbronn Institute for Mathematical Research, Bristol, United Kingdom}
\email{aluna.rizzoli@kcl.ac.uk}

\author[Adam R. Thomas]{Adam R. Thomas}
\address{Warwick Mathematics Institute, University of Warwick, Coventry CV4 7AL, United Kingdom}
\email{adam.r.thomas@warwick.ac.uk}

\begin{abstract} For a finite permutation group, a base is a set of points with trivial pointwise stabiliser, and the generalised Saxl graph records which pairs of points lie together in a base of minimum size. Burness and Giudici conjectured that any two vertices of the Saxl graph of a primitive group of base size two have a common neighbour, and Freedman, Huang, Lee and Rekv\'enyi extended this conjecture to arbitrary base size. We disprove both. For each integer $B\ge2$ we construct infinitely many primitive groups of base size $B$ whose generalised Saxl graphs contain two nonadjacent vertices with no common neighbour. At base size two, where this is the usual Saxl graph, we obtain three further infinite families, one each of affine, product and twisted wreath type, so the conjecture fails in three of the five O'Nan--Scott types; in the affine and product type families the Saxl graphs have diameter exactly three. This answers Problem~21.29 in the Kourovka Notebook in the negative. In the positive direction, we prove the Burness--Giudici conjecture for every primitive affine group whose point stabiliser is almost quasisimple of sporadic type, completing work of Lee and Popiel. We conjecture that no base-two counterexample of almost simple or diagonal type exists. \end{abstract}

\maketitle
\vspace{-26pt}

\section{Introduction}

Let $G\le\Symgrp(\Omega)$ be a finite transitive permutation group. A \emph{base} for $G$ is a subset of $\Omega$ with trivial pointwise stabiliser, and the \emph{base size} $b(G)$ is the least size of a base. Bases are a classical theme in permutation group theory, with a history going back to the nineteenth century and a central role in computational group theory; see the survey \cite{BC} and the introductions of \cite{BG,FHLR} for background. Much recent work concerns primitive groups with $b(G)=2$, the least possible base size for a nonregular group.

To study these groups, Burness and Giudici \cite{BG} introduced the \emph{Saxl graph} $\Sigma(G)$: its vertices are the points of $\Omega$, and two vertices are adjacent if they form a base. The graph encodes the base-two structure of $G$. They develop its basic theory in \cite{BG}, studying in particular its connectivity, diameter and valency. If $G$ is primitive then $\Sigma(G)$ is connected \cite[Lemma~2.1(ii)]{BG}. At the heart of that paper is the following conjecture \cite[Conjecture~4.5]{BG}.

\begin{conj}[Common Neighbour Conjecture]\label{bgconj}
Let $G$ be a primitive permutation group with $b(G)=2$. Then any two vertices of $\Sigma(G)$ have a common neighbour.
\end{conj}

In particular, Conjecture~\ref{bgconj} implies that $\Sigma(G)$ has diameter at most $2$. Freedman, Huang, Lee and Rekv\'enyi \cite{FHLR} extended the definition of the Saxl graph to arbitrary base size. The \emph{generalised Saxl graph} of a group $G$ with $b(G)\ge2$ joins two points when they lie in a common base of size $b(G)$. For primitive $G$ they conjecture that any two vertices have a common neighbour \cite[Conjecture~1.2]{FHLR}. They proved this for several families, including almost simple primitive groups with soluble point stabilisers.

This paper shows that both conjectures are false, yielding a negative answer to Problem~21.29 in the Kourovka Notebook \cite{KN}, which asks precisely if Conjecture~\ref{bgconj} holds.

\begin{thm}\label{mainthm}
For every integer $B\ge2$ there exist primitive permutation groups $G$ of arbitrarily large degree with $b(G)=B$ whose generalised Saxl graphs contain two nonadjacent vertices with no common neighbour.
\end{thm}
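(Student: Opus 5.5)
The plan is to use affine primitive groups $G=V\rtimes H$, with $V=\F_q^n$ and $H\le\GL(V)$ irreducible, so that $G$ is primitive. The reason is that translation invariance makes the generalised Saxl graph very rigid. A set $\{0,x_1,\dots,x_{B-1}\}$ is a base for $G$ if and only if the pointwise stabiliser $H_{x_1,\dots,x_{B-1}}$ in the linear action is trivial. Hence $b(G)=1+b_{\mathrm{lin}}(H)$, where $b_{\mathrm{lin}}(H)$ is the minimal number of vectors with trivial common stabiliser. Two points $x,y$ are adjacent exactly when $y-x$ lies in
\[
\mathcal R=\{v\in V: H_{v,x_2,\dots,x_{B-1}}=1 \text{ for some } x_2,\dots,x_{B-1}\in V\}.
\]
Note that $\mathcal R$ is $H$-invariant. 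So the graph is the Cayley graph $\mathrm{Cay}(V,\mathcal R)$. The statement reduces to finding irreducible $H$ with $b_{\mathrm{lin}}(H)=B-1$ and some $v\ne0$ with $v\notin\mathcal R$ and $v\notin\mathcal R+\mathcal R$; then $0$ and $v$ are nonadjacent with no common neighbour.

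The key steps, in order, are as follows.

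First, choose a family of pairs $(H,V)$ in which membership of $\mathcal R$ is governed by a clean invariant. My first choice is tensor-type modules: $V=\F_q^a\otimes\F_q^c$, viewed as $a\times c$ matrices, with $H=\GL_a(q)\circ\GL_c(q)$ (or a subgroup) acting by $X\mapsto AXC^{-1}$. These are irreducible, and vectors are classified by rank. I would compute $b_{\mathrm{lin}}(H)$ in terms of $a,c$ and tune the parameters, adding field automorphisms or restricting to $\mathrm{SL}$-type subgroups if necessary, so that $b_{\mathrm{lin}}(H)=B-1$ for each prescribed $B$.

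Second, identify $\mathcal R$ as a union of rank classes. The expectation is that a vector can begin a minimal base only if its rank is large, say full or near-full, because low-rank vectors have stabilisers too large to be killed by $B-2$ further vectors.

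Third, take $v$ of small rank, for instance rank one, and show that $v$ is not a sum of two vectors of $\mathcal R$. In the pure tensor setting this fails, since a rank-one matrix is a difference of two full-rank matrices. So the group must be modified so that $\mathcal R$ becomes a proper "thin" set, closed under neither addition nor negation in the relevant sense. I would therefore instead take $H$ to preserve an extra structure, such as a quadratic form or a norm map to a subfield, so that $\mathcal R$ lies inside a subset like $\{w:Q(w)\in S\}$. Then $v$ can be chosen with $v\notin S+S$ by a character-sum or cardinality argument over $\F_q$. Arbitrarily large degree comes from letting $q$ or the dimension grow.

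The main obstacle is the second step together with the third: simultaneously pinning down $\mathcal R$ exactly, so that it is an upper bound good enough to exclude $v-w$, while keeping the base size exactly $B$ and $H$ irreducible. I would first try $B=2$, where $\mathcal R$ is simply the set of regular vectors and the condition is $v\notin\mathcal R\cup(\mathcal R+\mathcal R)$. Once that works, I would lift to general $B$ by taking a direct-product or tensor-power construction. The hope is that the property "$w$ extends to a base" factorises coordinatewise, so that the $B=2$ obstruction persists while the base size increases by one at each step.
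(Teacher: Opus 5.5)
Your reduction is correct and matches the paper's: for affine $G=V{:}H$ the generalised Saxl graph is the Cayley graph on $V$ with connection set $A$, the set of vectors lying in a base of size $b(G)-1$ for $H$. So $\mathbf 0$ and $v$ are nonadjacent with no common neighbour exactly when $v\notin A\cup(A+A)$ (compare \cite[Lemma~5.1]{FHLR}).

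Beyond that, there is a genuine gap: the proposal contains no construction, and none of the routes you sketch leads to one.
\begin{enumerate}
\item[(1)] The tensor family fails, as you note yourself.
\item[(2)] The quadratic form or norm idea rests on an invalid step. Knowing $A\subseteq\{w:Q(w)\in S\}$ gives no control of $A+A$, because $Q(w_1+w_2)=Q(w_1)+Q(w_2)+\beta(w_1,w_2)$, and a norm map is multiplicative, not additive. So $Q(v)\notin S+S$ does not exclude $v\in A+A$. You also never bound $|A|$, yet any example needs $|A|\le|V|/2$, since otherwise any two neighbourhoods meet.
\item[(3)] The passage from $B=2$ to general $B$ has no mechanism. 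A direct product in product action is imprimitive. In a wreath product $X\wr Q$ the base size is governed by how $D(Q)$ compares with the numbers of regular orbits of $X$ on $\Delta^k$ \cite[Theorem~6.1]{FHLR}; it does not rise by one per layer.
\item[(4)] Once the base size exceeds two, the neighbours of a constant tuple are defined through larger bases. Their coordinates need not be adjacent in $\Sigma(X)$, because later rows of the base array can kill the coordinate stabilisers. So a base-two obstruction does not persist coordinatewise.
\end{enumerate}

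The missing idea is a device that makes $A$ rigid. The paper uses the following setup.
\begin{enumerate}
\item[(a)] Take $L_q=V_q{:}H_q$, where $q=3^d$ with $d$ odd, $H_q$ is the odd-order Frobenius group of maps $x\mapsto ax+b$ of $\F_q$ with $a$ a nonzero square, and $V_q$ is the deleted permutation module over $\F_2$.
\item[(b)] Form $G=L_q\wr S_r$ in product action with $r=r_{B-1}$, the number of regular $H_q$-orbits on $V_q^{B-1}$. This choice of $r$ gives $b(G)=B$.
\item[(c)] By \cite[Lemma~6.2]{FHLR}, the columns of any minimal base array through $\mathbf 0$ meet every regular $L_q$-orbit on $V_q^B$ exactly once. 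Hence all $x\in A$ have the same number of coordinates in each $H_q$-orbit.
\item[(d)] Let $z=(u,\mathbf 0,\dots,\mathbf 0)$, with $u$ the indicator vector of $\Omega_q\setminus\{\omega_0\}$. The invariant in (c) rules out $z\in A$, since some column of the base array would be zero.
\item[(e)] The same invariant turns $z=x+y$ with $x,y\in A$ into $u=(1+h)x_1$ for some $h\in H_q$. This is forbidden by the parity functional on an odd-length $\langle h\rangle$-orbit avoiding $\omega_0$.
\end{enumerate}
Without some such rigidity mechanism, your plan does not prove the theorem.
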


We prove Theorem~\ref{mainthm} using the affine family of Section~\ref{sec:everybase}. The construction is defined for every base size $B\ge2$, and taking $B=2$ gives an infinite family of affine counterexamples to Conjecture~\ref{bgconj}. We have formally verified Theorem~\ref{mainthm} in Lean~4 \cite{Lean4} using this construction. The statement of the formalised theorem relies on only four definitions beyond Mathlib \cite{Mathlib}: a finite faithful permutation group, a base, base size and the generalised Saxl graph. The proof contains no \texttt{sorry} placeholders or project-specific axioms, and the Lean source is in the same repository~\cite{AlunaAdam}.

In Sections~\ref{sec:affine}--\ref{sec:tw} we construct three further infinite families of base-two primitive groups, one each of affine, product and twisted wreath type. The failure of the conjecture therefore occurs in three of the five O'Nan--Scott types \cite[Theorem~4.1A]{DM}. In the affine and product type families, every counterexample has Saxl-graph diameter exactly $3$. For each of the three families, the analysis relies on explicit computational input (Lemmas~\ref{affineinput}, \ref{productinput} and~\ref{twinput}) and on the machinery of Section~\ref{sec:prelim}. Self-contained GAP \cite{GAP} scripts performing these computations are available in the repository \cite{AlunaAdam}.

 Writing $\Sigma_G(\alpha)$ for the neighbourhood of $\alpha$, Burness and Huang proposed the stronger condition
\[
(\dagger)\qquad
\Sigma_G(\alpha)\text{ meets every regular }G_\beta\text{-orbit on }\Omega
\quad\text{for all }\alpha,\beta\in\Omega.
\]
Thus $(\dagger)$ implies that any two vertices have at least $r(G)$ common neighbours, where $r(G)$ is the number of regular suborbits. Although this condition is genuinely stronger for an individual action, Burness and Huang proved that Conjecture~\ref{bgconj} is equivalent to the universal statement that every primitive base-two group satisfies $(\dagger)$ \cite[Conjecture~5.8 and Proposition~5.9]{BH}.

Before the present work, substantial positive evidence had accumulated. Burness and Giudici verified Conjecture~\ref{bgconj} for various families of almost simple groups \cite{BG}, while Burness and Huang verified $(\dagger)$ computationally for every primitive group of degree at most $4095$ \cite[Remark~5.10]{BH}. The common neighbour conjecture was proved by Burness and Huang for every almost simple primitive group with soluble point stabilisers \cite{BHs}. Chen and Du proved the weaker diameter-two statement for groups with socle $\operatorname{PSL}_2(q)$ \cite{CD}, while Burness and Huang established the common neighbour conjecture in this case \cite[Theorem~4.22]{BHs}. The remaining rank-one socles were treated in recent preprints, $\operatorname{PSU}_3(q)$ in \cite{CDL}, $\operatorname{Ree}(q)$ and $\operatorname{Sz}(q)$ in \cite{CDR}. 

Recall that a group $H$ is \emph{almost quasisimple} if it has a unique quasisimple subnormal subgroup. We say that such a group is \emph{of sporadic type} if $\operatorname{soc}(H/Z(H))$ is a sporadic simple group. Lee and Popiel proved Conjecture~\ref{bgconj} for affine groups whose point stabiliser is almost quasisimple of sporadic type, apart from ten cases \cite{LP}. In Section~\ref{sec:searches} we verify the conjecture in these ten cases. Huang proved the common neighbour conjecture for diagonal type groups whose top group is neither $A_k$ nor $S_k$ \cite[Theorem~5.6]{HuT} (see also \cite{Hu}).

The point stabilisers in the product type family are soluble, so the soluble-stabiliser theorem of \cite{BHs} does not extend from almost simple groups to product type. At the other extreme, Proposition~\ref{prop:perfect} gives an affine counterexample of degree $3^{15}$ whose point stabiliser $\AGL_4(2)$ is perfect. Both non-affine families have a unique regular suborbit, so for them $(\dagger)$ reduces to the ordinary common neighbour condition.

Complementing these results, the searches of Section~\ref{sec:searches} found no counterexample of non-affine type of degree at most $10^8$, and none of diagonal type of degree at most $10^{24}$ (Table~\ref{tab:ranges}). Since Sections~\ref{sec:affine}--\ref{sec:tw} provide counterexamples of affine, product and twisted wreath type, only the almost simple and diagonal types remain open. Together with the results cited above, this suggests the following amended conjecture.

\begin{conj}\label{ourconj}
Let $G$ be a primitive permutation group of almost simple or diagonal type with $b(G)=2$. Then any two vertices of $\Sigma(G)$ have a common neighbour.
\end{conj}

We know of no Saxl graph of a base-two primitive permutation group $G$ of diameter greater than $3$. All base-two primitive groups of degree at most $2^{18}-1$ have Saxl-graph diameter at most $3$, as do the groups in the affine family of Section~\ref{sec:affine} and the product type family. For the twisted wreath family we only know that the diameter is at least $3$. We therefore ask the following.

\begin{ques}\label{diamq}
Is the diameter of the Saxl graph of a primitive permutation group $G$ with $b(G)=2$ always at most $3$?
\end{ques}

\section{Preliminaries} \label{sec:prelim}

This section introduces the product action machinery used in Sections~\ref{sec:affine}--\ref{sec:everybase} to produce infinite families of counterexamples.

Let $X\le\Symgrp(\Delta)$ be a transitive group with nontrivial point stabilisers. For $\alpha\in\Delta$, the orbits of $X_\alpha$ on $\Delta$ are the \emph{suborbits} of $X$, and such an orbit is \emph{regular} if $X_\alpha$ acts regularly on it. By transitivity the number $r(X)$ of regular suborbits does not depend on $\alpha$. An orbit of $X$ on $\Delta^2$ is an \emph{orbital}. Write $\mathcal R(X)$ for the set of \emph{regular orbitals} of $X$, namely the $X$-orbits on the \emph{ordered bases} of size two, or equivalently the orbitals on which $X$ acts regularly. Choosing a point gives a bijection between $\mathcal R(X)$ and the regular suborbits, so $|\mathcal R(X)|=r(X)$; see \cite[Remark~2.2]{BG}. We say that $\mathcal O\in\mathcal R(X)$ is \emph{self-paired} if $\mathcal O=\{(\beta,\alpha)\mid(\alpha,\beta)\in\mathcal O\}$.

A partition $\Pi$ of $\{1,\dots,m\}$ is \emph{distinguishing} for a transitive $Q\le S_m$ if the intersection of the setwise stabilisers in $Q$ of its parts is trivial, and the \emph{distinguishing number} $D(Q)$ is the least number of parts in such a partition; see \cite[Section~2]{BG}. If $C_\ell$ has prime order $\ell$ and acts regularly, every partition with at least two parts is distinguishing, so $D(C_\ell)=2$. For the natural action of $S_m$, only the partition into singletons is distinguishing, so $D(S_m)=m$.

Now let $G=X\wr Q$ act on $\Delta^m$ in product action. For $a\in\Delta$, write $a^m=(a,\dots,a)$ for the constant $m$-tuple. We record the standard primitivity criterion for this action \cite[Lemma~2.7A]{DM}.

\begin{lem}\label{paprim}
Let $X\le\Symgrp(\Delta)$ and $Q\le S_m$ be nontrivial, where $m\ge2$. Then $X\wr Q$ is primitive on $\Delta^m$ in product action if and only if $X$ is primitive but not regular on $\Delta$ and $Q$ is transitive on $\{1,\dots,m\}$.
\end{lem}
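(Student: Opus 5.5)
We prove both directions by working with the block systems of $G=X\wr Q$ on $\Delta^m$, taking the point $\omega=a^m$. Since $G$ is transitive on $\Delta^m$ whenever $X$ is transitive on $\Delta$, primitivity amounts to showing that $G_\omega=X_a\wr Q$ is maximal in $G$. Equivalently, we show that every $G$-invariant partition of $\Delta^m$ is trivial.

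\emph{Necessity.} First, if $X$ is intransitive, then $G$ is intransitive on $\Delta^m$, since the first-coordinate orbits of points are unions of $X$-orbits permuted by $Q$. Next, suppose $X$ is transitive but imprimitive, with a nontrivial block system $\mathcal B$ on $\Delta$. Then the partition of $\Delta^m$ into products $B_1\times\dots\times B_m$ with $B_i\in\mathcal B$ is $G$-invariant and nontrivial. Suppose $Q$ is intransitive with an orbit $I\ne\{1,\dots,m\}$. Then the partition according to the projection onto the coordinates in $I$ is $G$-invariant; it is nontrivial because $|\Delta|\ge2$ and $I$ is a proper nonempty subset. Finally, suppose $X$ is regular and primitive, so $X\cong C_p$ and $|\Delta|=p$. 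Then the base group $N=X^m$ is regular abelian and $G_\omega\cong Q$. The diagonal subgroup $D=\{(x,\dots,x)\}$ is normalised by $Q$, so $DQ$ is a subgroup strictly between $Q$ and $G$, since $m\ge2$. Hence $G$ is imprimitive, and the orbit of $\omega$ under $DQ$ is a nontrivial block.

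\emph{Sufficiency.} Now suppose $X$ is primitive and not regular, and $Q$ is transitive. Let $\mathcal P$ be a $G$-invariant partition with a block $P\ni\omega$ and $P\ne\{\omega\}$. Pick $\beta=(b_1,\dots,b_m)\in P$ with $\beta\ne\omega$, say $b_1\ne a$. Since $X_a$ is nontrivial and $X$ is primitive, $X_a$ fixes no point other than $a$; indeed, the fixed points of $X_a$ form a block for the normaliser, and $X$ is nonregular primitive. Choose $y\in X_a$ with $b_1^y\ne b_1$. Apply $g=(y,1,\dots,1)\in G_\omega$. Since $P$ is a block containing $\omega$, we have $\beta^g\in P$. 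So $P$ contains two points that differ only in coordinate $1$, namely $(b_1,b_2,\dots)$ and $(b_1^y,b_2,\dots)$.

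Translating by a suitable element of $N$ gives a block $P'$ (in the same partition) containing $a^m$ and $(c,a,\dots,a)$ with $c\ne a$. Then $P'$ is the block through $a^m$, so $P'=P$. Intersecting $P$ with the "first-coordinate line" $\Delta\times\{a\}^{m-1}$ gives a block for the primitive action of $X$ on the first factor, and this block has size at least $2$. Hence the intersection is the whole line, so $P\supseteq\Delta\times\{a\}^{m-1}$.

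Conjugating by $Q$, which is transitive and fixes $\omega$, shows that $P$ contains every coordinate line through $\omega$. Using the base group, the block through any point $\gamma\in P$ likewise contains all coordinate lines through $\gamma$. Moving one coordinate at a time, $P=\Delta^m$, so $G$ is primitive.

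The main obstacle is the step showing that $P$ contains a pair of points differing in exactly one coordinate, together with the careful use of the fact that a nonregular primitive $X$ has $X_a$ fixing only $a$. The fixed-point set of $X_a$ is a block of imprimitivity for $X$, and is proper because $X$ is not regular.
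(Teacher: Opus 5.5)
Your proof is correct. The paper gives no argument of its own for this lemma; it quotes it as the standard primitivity criterion for product action (Dixon--Mortimer, Lemma~2.7A). Your proposal is therefore a self-contained substitute for that citation, along the usual block-theoretic lines:
\begin{itemize}
\item For necessity you exhibit explicit $G$-invariant partitions. These are products of blocks of $X$, fibres of the projection onto a proper $Q$-orbit, and, in the regular case, the orbit of $\omega$ under the overgroup $DQ$ of $G_\omega=Q$, which is the diagonal $\{b^m : b\in\Delta\}$.
\item For sufficiency you use nonregularity to find two points of a nontrivial block through $a^m$ that differ in exactly one coordinate. Primitivity of $X$ then fills a coordinate line, and $Q$ together with the base group fills $\Delta^m$.
\end{itemize}
The citation is shorter; your version shows exactly where each hypothesis enters, and in particular that nonregularity is needed precisely to make $X_a$ move $b_1$.

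Three small points to tidy:
\begin{itemize}
\item In the intransitive case, the clean witness is that $O^m$ is a proper nonempty $G$-invariant subset for any $X$-orbit $O\ne\Delta$. Your sentence about first-coordinate orbits does not quite say this.
\item The fixed-point set of $X_a$ is a block for $X$ itself, not merely for $N_X(X_a)$. Its proof uses that $X_a\le X_b$ forces $X_a=X_b$, which holds either by finiteness (the two stabilisers are conjugate) or by maximality of $X_a$. You state the correct version in your closing remark.
\item The regular case needs neither $X\cong C_p$ nor abelianness of $N$. For any nontrivial regular $X$ and $m\ge2$ one already has $G_\omega=Q<DQ<G$.
\end{itemize}
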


Let $x=(x_1,\dots,x_m)$ and $y=(y_1,\dots,y_m)$ in $\Delta^m$ be such that each $(x_j,y_j)$ is an ordered base for $X$, and define the tuple of coordinate orbitals
\[
\rho_{x,y}=\bigl((x_1,y_1)^X,\dots,(x_m,y_m)^X\bigr)
\in\mathcal R(X)^m.
\]
For $\rho=(\rho_1,\dots,\rho_m)\in\mathcal R(X)^m$, write $\Pi(\rho)$ for the partition of $\{1,\dots,m\}$ in which $i$ and $j$ lie in the same part if and only if $\rho_i=\rho_j$. We use the following standard criterion; see \cite[Lemma~2.8]{BG} and \cite[Lemma~4.1]{BH}.

\begin{lem}\label{basecrit}
Let $x,y\in\Delta^m$. Then $\{x,y\}$ is a base for $G=X\wr Q$ if and only if each $(x_j,y_j)$ is an ordered base for $X$ and the partition $\Pi(\rho_{x,y})$ is distinguishing for $Q$.
\end{lem}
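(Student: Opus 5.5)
We prove the two implications separately, working directly from the definition of the product action of $G=X\wr Q$ on $\Delta^m$. An element $g=(h_1,\dots,h_m)\pi$ acts by permuting coordinates via $\pi\in Q$ and then applying the $h_j$ coordinatewise. The first step is to describe the pointwise stabiliser $G_x\cap G_y$ explicitly: $g$ fixes $x$ and $y$ exactly when, for each $j$, the element $h$ applied in position $j^\pi$ maps $(x_j,y_j)$ to $(x_{j^\pi},y_{j^\pi})$. The precise indexing depends on the convention for the product action, but any convention gives a relation of this shape.

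For the forward direction, suppose $\{x,y\}$ is a base.

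First suppose some $(x_j,y_j)$ is not an ordered base for $X$. Then there is $1\ne h\in X_{x_j}\cap X_{y_j}$. Placing $h$ in coordinate $j$ and the identity elsewhere, with $\pi=1$, gives a nontrivial element fixing both $x$ and $y$, a contradiction.

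So every $(x_j,y_j)$ is a base and $\rho=\rho_{x,y}$ is defined. Suppose $\pi\in Q$ stabilises every part of $\Pi(\rho)$. Then $\rho_{j^\pi}=\rho_j$ for all $j$, since $j$ and $j^\pi$ lie in the same part. Hence for each $j$ there is $h_j\in X$ with $(x_j,y_j)^{h_j}=(x_{j^\pi},y_{j^\pi})$, and $h_j$ is unique because the orbital is regular. With these $h_j$ arranged to match the convention, the element $(h_1,\dots,h_m)\pi$ fixes $x$ and $y$. It is therefore trivial, which forces $\pi=1$. Hence the intersection of the part stabilisers is trivial, that is, $\Pi(\rho)$ is distinguishing.

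For the converse, let $g=(h_1,\dots,h_m)\pi$ fix both $x$ and $y$.

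The coordinate relation shows that $(x_j,y_j)$ and $(x_{j^\pi},y_{j^\pi})$ lie in the same $X$-orbit. So $\rho_j=\rho_{j^\pi}$ for every $j$, and $\pi$ preserves each part of $\Pi(\rho)$. Since $\Pi(\rho)$ is distinguishing, $\pi=1$.

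Then each $h_j$ fixes both $x_j$ and $y_j$. As $(x_j,y_j)$ is a base for $X$, each $h_j=1$, so $g=1$.

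The only real obstacle is bookkeeping. We must fix one convention for the product action and check that the coordinate relation, with its indices $j$ versus $j^\pi$ or $j^{\pi^{-1}}$, is consistent in both directions. The steps that do the work are the uniqueness of $h_j$, which comes from regularity of the orbitals, and the observation that matching orbitals is exactly the condition that $\pi$ stabilises each part.
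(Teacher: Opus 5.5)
Your proof is correct, and it is the standard direct argument behind the results the paper cites for this lemma (\cite[Lemma~2.8]{BG} and \cite[Lemma~4.1]{BH}); the paper itself gives no separate proof. Two small remarks: uniqueness of the $h_j$ is never actually needed, since existence suffices in the forward direction; on the other hand, you silently use faithfulness of the product action, which holds because $|\Delta|\ge2$, to pass from ``$g$ acts trivially on $\Delta^m$'' to $\pi=1$.
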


The base size and the number of regular suborbits of such a wreath product are likewise known. The following combines \cite[Corollary~2.9]{BG} with \cite[Theorem~4.4, Remark~4.5 and Corollary~4.7]{BH}.

\begin{prop}\label{bookkeeping}
Let $X$ be transitive with $b(X)=2$ and let $Q\le S_m$ be transitive. Then the map induced by $(x,y)\mapsto\rho_{x,y}$ identifies the elements of $\mathcal R(X\wr Q)$ with the $Q$-orbits of tuples $\rho\in\mathcal R(X)^m$ for which $\Pi(\rho)$ is distinguishing. Consequently, $b(X\wr Q)=2$ if and only if $r(X)\ge D(Q)$. In particular, for $C_\ell$ of prime order $\ell$ acting regularly,
\begin{equation}\label{eq:cyc}
r(X\wr C_\ell)=\frac{r(X)^\ell-r(X)}{\ell}.
\end{equation}
\end{prop}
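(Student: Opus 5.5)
The plan is to obtain Proposition~\ref{bookkeeping} directly from Lemma~\ref{basecrit}, by turning the description of bases of size two into a description of regular orbitals. I will use the standard fact that $G=X\wr Q$ is the semidirect product $X^m\rtimes Q$. Here $X^m$ acts coordinatewise and $Q$ permutes coordinates.

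First I define the map. Take an ordered base $(x,y)$ for $G$. By Lemma~\ref{basecrit}, each pair $(x_j,y_j)$ is an ordered base for $X$, so $\rho_{x,y}\in\mathcal R(X)^m$ is defined and $\Pi(\rho_{x,y})$ is distinguishing. Next I check how $\rho_{x,y}$ transforms under $G$.
\begin{itemize}
\item An element of $X^m$ fixes every coordinate orbital, so it fixes $\rho_{x,y}$.
\item An element $\sigma\in Q$ permutes the coordinates of $\rho_{x,y}$ by $\sigma$.
\end{itemize}
Hence $(x,y)\mapsto\rho_{x,y}$ induces a well-defined map from $\mathcal R(G)$ to the set of $Q$-orbits of tuples $\rho$ with $\Pi(\rho)$ distinguishing.

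Surjectivity is immediate. Given such a $\rho$, choose for each $j$ a representative $(x_j,y_j)\in\rho_j$. Then $\rho_{x,y}=\rho$, and Lemma~\ref{basecrit} shows that $(x,y)$ is a base.

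For injectivity, suppose $\rho_{x,y}$ and $\rho_{x',y'}$ lie in the same $Q$-orbit. First apply an element of $Q$ so that $\rho_{x,y}=\rho_{x',y'}$. Then for each $j$ choose $g_j\in X$ with $(x_j,y_j)^{g_j}=(x'_j,y'_j)$. The element $(g_1,\dots,g_m)\in X^m$ maps $(x,y)$ to $(x',y')$, so the two pairs lie in the same $G$-orbit. This step is the only place needing care: I must check that the $Q$-action on $\Delta^m$ and its action on tuples of orbitals use compatible conventions, so that the permutation of coordinates is applied consistently on both sides.

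The equivalence $b(G)=2\iff r(X)\ge D(Q)$ then follows quickly. Since $G$ is nonregular, $b(G)=2$ holds exactly when $\mathcal R(G)\neq\emptyset$. If $r(X)\ge D(Q)$, take a distinguishing partition with $D(Q)$ parts and assign distinct regular orbitals of $X$ to its parts; this gives a valid $\rho$. Conversely, a valid $\rho$ has $\Pi(\rho)$ distinguishing with at most $r(X)$ parts, so $D(Q)\le r(X)$.

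For \eqref{eq:cyc}, write $r=r(X)$ and $\ell$ for the prime order of the regular group $C_\ell$. The distinguishing tuples are exactly the non-constant ones, because every partition with at least two parts is distinguishing for $C_\ell$. There are $r^\ell-r$ of them. A non-constant tuple has trivial stabiliser in $C_\ell$, since any nontrivial element generates $C_\ell$ and would force the tuple to be constant. Therefore $C_\ell$ acts semiregularly on these tuples, every orbit has size $\ell$, and the number of orbits is $(r^\ell-r)/\ell$.
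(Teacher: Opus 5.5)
Your proof is correct. The paper gives no argument of its own for this proposition; it quotes \cite[Corollary~2.9]{BG} together with \cite[Theorem~4.4, Remark~4.5 and Corollary~4.7]{BH}.

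You instead derive all three parts directly from Lemma~\ref{basecrit} using the decomposition $X\wr Q=X^m\rtimes Q$:
\begin{itemize}
\item the base group $X^m$ gives injectivity coordinate by coordinate;
\item the top group $Q$ is exactly what forces passage to $Q$-orbits of tuples;
\item the two directions of $b(X\wr Q)=2\iff r(X)\ge D(Q)$ come from assigning distinct regular orbitals to the parts of a minimal distinguishing partition, and conversely from counting the parts of $\Pi(\rho)$;
\item \eqref{eq:cyc} becomes a necklace count, with $C_\ell$ acting semiregularly on the non-constant tuples.
\end{itemize}

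Your route makes the proposition self-contained apart from the base criterion, and shows transparently why $Q$-orbits appear. The citation route is shorter and delegates the bookkeeping, including the convention check you flag. That check is routine: let $\sigma\in Q$ act on $\Delta^m$ and on $\mathcal R(X)^m$ by the same coordinate-permutation rule. Then $\rho_{(x,y)^\sigma}=(\rho_{x,y})^\sigma$ holds by definition. It also follows that the set of tuples $\rho$ with $\Pi(\rho)$ distinguishing is $Q$-invariant, so the target set of your map is well defined.
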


The following property passes to product action wreath products.

\begin{lem}\label{inherit}
Let $X$ be transitive on $\Delta$ with $b(X)=2$, let $Q\le S_m$ be transitive with $m\ge2$, and suppose $b(W)=2$ for $W=X\wr Q$. Suppose that, for every $\mathcal O\in\mathcal R(X)$ and every $\alpha,\beta\in\Delta$, there exist $\gamma,\delta\in\Delta$ such that $(\alpha,\gamma),(\gamma,\delta),(\delta,\beta)\in\mathcal O$. Then, for every $\mathcal P\in\mathcal R(W)$ and every $x,y\in\Delta^m$, there exist $u,v\in\Delta^m$ such that $(x,u),(u,v),(v,y)\in\mathcal P$. In particular, $\diam\Sigma(W)\le 3$.
\end{lem}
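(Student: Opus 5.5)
The plan is to use Proposition~\ref{bookkeeping} to represent $\mathcal P$ by a tuple $\rho=(\rho_1,\dots,\rho_m)\in\mathcal R(X)^m$ with $\Pi(\rho)$ distinguishing for $Q$. We then build the path coordinate by coordinate, with the hypothesis on $X$ applied separately in each coordinate.

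First, fix $\mathcal P\in\mathcal R(W)$. By Proposition~\ref{bookkeeping}, $\mathcal P$ corresponds to the $Q$-orbit of some tuple $\rho$ with $\Pi(\rho)$ distinguishing. Concretely, choose $(a,b)\in\mathcal P$ and set $\rho=\rho_{a,b}$. The key claim is that a pair $(x',y')\in(\Delta^m)^2$ lies in $\mathcal P$ whenever each $(x'_j,y'_j)$ is an ordered base for $X$ and $\rho_{x',y'}=\rho$ exactly; no $Q$-twist is needed. This follows from the identification in Proposition~\ref{bookkeeping}. The map $(x,y)\mapsto\rho_{x,y}$ induces the bijection between $W$-orbits and $Q$-orbits of such tuples. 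Hence two ordered bases of $W$ with equal coordinate-orbital tuples lie in the same $W$-orbit. Equivalently, the base group $X^m$ already carries $(x',y')$ to $(a,b)$, since each $(x'_j,y'_j)$ and $(a_j,b_j)$ lie in the same regular orbital $\rho_j$ of $X$.

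Second, given $x,y\in\Delta^m$, we apply the hypothesis in each coordinate $j$ with $\mathcal O=\rho_j$, $\alpha=x_j$ and $\beta=y_j$. This yields $\gamma_j,\delta_j\in\Delta$ with $(x_j,\gamma_j),(\gamma_j,\delta_j),(\delta_j,y_j)\in\rho_j$. We set $u=(\gamma_1,\dots,\gamma_m)$ and $v=(\delta_1,\dots,\delta_m)$. Then every coordinate pair of $(x,u)$, $(u,v)$ and $(v,y)$ is an ordered base of $X$, since it lies in a regular orbital. Moreover $\rho_{x,u}=\rho_{u,v}=\rho_{v,y}=\rho$. By the first step, all three pairs lie in $\mathcal P$; in particular they are bases of $W$, consistent with Lemma~\ref{basecrit}, since $\Pi(\rho)$ is distinguishing.

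Finally, for the diameter bound we use $b(W)=2$, which guarantees that $\mathcal R(W)$ is nonempty. Picking any $\mathcal P$, every pair $x,y$ is joined by the path $x,u,v,y$ in $\Sigma(W)$, so $\diam\Sigma(W)\le3$. Connectivity is automatic here, and the path may degenerate (for instance if $u=y$), but that only shortens distances. The main obstacle is the first step: we must check carefully that the identification in Proposition~\ref{bookkeeping} gives that equal coordinate tuples determine the same orbital, and not merely tuples that agree up to $Q$. If that is unclear from the citation, we argue directly: for each $j$ choose $g_j\in X$ with $(x'_j,y'_j)^{g_j}=(a_j,b_j)$. The element $(g_1,\dots,g_m)$ of the base group then maps $(x',y')$ to $(a,b)$.
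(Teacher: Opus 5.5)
Your proposal is correct and matches the paper's proof: you represent $\mathcal P$ by a tuple $\rho$ with $\Pi(\rho)$ distinguishing, solve the three-step problem separately in each coordinate, and conclude from $\rho_{x,u}=\rho_{u,v}=\rho_{v,y}=\rho$, with the diameter bound coming from $\mathcal R(W)\neq\varnothing$. Your base-group argument that equal coordinate tuples lie in the same $W$-orbital is a useful detail that the paper leaves implicit in its appeal to Proposition~\ref{bookkeeping}.
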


\begin{proof}
Let $\mathcal P\in\mathcal R(W)$ be represented by $\rho=(\mathcal O_1,\dots,\mathcal O_m)\in\mathcal R(X)^m$, where $\Pi(\rho)$ is distinguishing. Given $x,y\in\Delta^m$, choose, for each $j$, points $u_j,v_j\in\Delta$ such that $(x_j,u_j),(u_j,v_j),(v_j,y_j)\in\mathcal O_j$. With $u=(u_j)$ and $v=(v_j)$ we then have $\rho_{x,u}=\rho_{u,v}=\rho_{v,y}=\rho$, so each of $(x,u)$, $(u,v)$ and $(v,y)$ belongs to $\mathcal P$. Since $b(W)=2$, the set $\mathcal R(W)$ is nonempty, and the diameter bound follows.
\end{proof}

For a vertex $a$ of $\Sigma(X)$, recall that $\Sigma_X(a)$ denotes its neighbourhood. The next proposition starts from a particularly simple failure of $(\dagger)$. When $r(X)=2$, its hypothesis says precisely that the pair $(\alpha,\beta)$ witnesses such a failure. The proof is built on the argument for constant tuples in \cite[Lemma~5.6]{BH}.

\begin{prop}\label{cprop}
Let $X$ be transitive on $\Delta$ with $b(X)=2$ and $r(X)\ge 2$, and let $\alpha,\beta\in\Delta$ be distinct points such that $\Sigma_X(\alpha)$ meets at most one regular $X_\beta$-orbit. Let $\ell$ be prime and let $W=X\wr C_\ell$ act on $\Delta^\ell$ in product action, with $C_\ell$ regular. Then $b(W)=2$, and $\alpha^\ell$ and $\beta^\ell$ are nonadjacent vertices of $\Sigma(W)$ with no common neighbour.
\end{prop}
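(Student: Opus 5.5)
The plan is to work throughout with Lemma~\ref{basecrit} and Proposition~\ref{bookkeeping}, specialised to $Q=C_\ell$ acting regularly, so that every partition with at least two parts is distinguishing.

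First, $b(W)=2$. Since $r(X)\ge2=D(C_\ell)$, Proposition~\ref{bookkeeping} gives $b(W)=2$ directly.

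Second, $\alpha^\ell$ and $\beta^\ell$ are nonadjacent. If $(\alpha,\beta)$ is not an ordered base for $X$, this is immediate from Lemma~\ref{basecrit}. Otherwise every coordinate pair is $(\alpha,\beta)$, so $\rho_{\alpha^\ell,\beta^\ell}$ is constant. Then $\Pi(\rho_{\alpha^\ell,\beta^\ell})$ has a single part, which is not distinguishing for the nontrivial group $C_\ell$. So $\{\alpha^\ell,\beta^\ell\}$ is not a base.

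Third, there is no common neighbour. Suppose $u=(u_1,\dots,u_\ell)$ were adjacent to both $\alpha^\ell$ and $\beta^\ell$. By Lemma~\ref{basecrit}, each $(\alpha,u_j)$ and each $(\beta,u_j)$ is an ordered base for $X$. Moreover, both $\Pi(\rho_{\alpha^\ell,u})$ and $\Pi(\rho_{\beta^\ell,u})$ have at least two parts, since they must be distinguishing for $C_\ell$.

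Now consider the coordinates $u_j$. Each lies in $\Sigma_X(\alpha)$, because $\{\alpha,u_j\}$ is a base. Each also lies in a regular $X_\beta$-orbit, because $\{\beta,u_j\}$ is a base, so $X_\beta$ acts regularly on $u_j^{X_\beta}$. By hypothesis, $\Sigma_X(\alpha)$ meets at most one regular $X_\beta$-orbit, so all the $u_j$ lie in a single $X_\beta$-orbit $u_1^{X_\beta}$. Hence $(\beta,u_j)^X=(\beta,u_1)^X$ for every $j$, using that the suborbit $u_j^{X_\beta}$ determines the orbital $(\beta,u_j)^X$. So $\rho_{\beta^\ell,u}$ is constant and $\Pi(\rho_{\beta^\ell,u})$ has only one part, a contradiction. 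Note that only the condition at $\beta$ is used, with the orbital ordered as $(\beta,u_j)$. The unordered adjacency $\{\beta^\ell,u\}$ may be tested via either ordered pair, since base-ness is symmetric.

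The main obstacle, though a mild one, is the bookkeeping in the third step. One has to be careful that the base criterion of Lemma~\ref{basecrit} uses the orbitals of the ordered pairs $(\beta,u_j)$, and that these are in bijection with the $X_\beta$-orbits $u_j^{X_\beta}$ via the chosen point $\beta$. It is this correspondence that turns the hypothesis on regular $X_\beta$-orbits into constancy of $\rho_{\beta^\ell,u}$. The rest is immediate from the two lemmas.
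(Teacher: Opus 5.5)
Your proposal is correct and follows essentially the same argument as the paper's proof. You get $b(W)=2$ from Proposition~\ref{bookkeeping} with $D(C_\ell)=2$, and nonadjacency from the constant tuple $\rho_{\alpha^\ell,\beta^\ell}$. The absence of a common neighbour comes, as in the paper, from the hypothesis forcing all $u_j$ into one regular $X_\beta$-orbit, so that $\Pi(\rho_{\beta^\ell,u})$ has only one part, contradicting Lemma~\ref{basecrit}.
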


\begin{proof}
Since $D(C_\ell)=2$, Proposition~\ref{bookkeeping} gives $b(W)=2$. If $\{\alpha,\beta\}$ is not a base for $X$, then $\{\alpha^\ell,\beta^\ell\}$ is not a base for $W$ by Lemma~\ref{basecrit}. If it is, then $\rho_{\alpha^\ell,\beta^\ell}$ is constant, so its associated partition has one part and is not distinguishing. In either case, $\alpha^\ell$ and $\beta^\ell$ are nonadjacent.

Suppose that $z=(z_1,\dots,z_\ell)$ is a common neighbour of $\alpha^\ell$ and $\beta^\ell$. Each $z_j$ lies in $\Sigma_X(\alpha)$ and in a regular $X_\beta$-orbit. Moreover, Lemma~\ref{basecrit} shows that $\Pi(\rho_{\beta^\ell,z})$ has at least $D(C_\ell)=2$ parts. Therefore, since the regular orbitals of $X$ correspond to the regular $X_\beta$-orbits, the $z_j$ lie in at least two distinct such orbits, contrary to the hypothesis.
\end{proof}

The next result starts from a pair $\alpha,\beta$ such that, for every common neighbour $\gamma$, the points $\alpha$ and $\beta$ lie in different regular $X_\gamma$-orbits. This property is preserved on passing to a cyclic wreath product and, after a final wreath product with a symmetric group, produces a pair with no common neighbour.

\begin{prop}\label{sprop}
Let $X$ be transitive on $\Delta$ with $b(X)=2$ and $r(X)\ge2$. Let $\alpha,\beta\in\Delta$ be distinct points such that no common neighbour $\gamma$ of $\alpha$ and $\beta$ satisfies $(\gamma,\alpha)^X=(\gamma,\beta)^X$. Let $Y$ be obtained from $X$ by finitely many, possibly zero, product action wreath products with regular cyclic groups of prime order. Suppose that every group formed along the way, including $Y$ itself, has at least two regular orbitals. Let $W=Y\wr S_{r(Y)}$ act in product action. Then $b(W)=2$, $r(W)=1$, and $\Sigma(W)$ contains two nonadjacent vertices with no common neighbour.
\end{prop}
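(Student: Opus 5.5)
The plan is to carry one property through the whole tower of wreath products and only at the very end exploit $r(W)=1$. For a transitive group $Z$ on $\Gamma$ with $b(Z)=2$ and distinct points $a,b\in\Gamma$, say that $(a,b)$ is \emph{separated} in $Z$ if no common neighbour $c$ of $a$ and $b$ in $\Sigma(Z)$ satisfies $(c,a)^Z=(c,b)^Z$. By hypothesis $(\alpha,\beta)$ is separated in $X$.

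\textbf{Step 1 (inductive step).} Let $Z$ be a group in the chain, let $Q\le S_m$ be transitive (cyclic of prime order, or $S_{r(Z)}$ at the last stage), and suppose $b(Z\wr Q)=2$. I will show that $(a^m,b^m)$ is separated in $Z\wr Q$ whenever $(a,b)$ is separated in $Z$. Let $z$ be a common neighbour of $a^m$ and $b^m$. By Lemma~\ref{basecrit}, every $z_j$ is a common neighbour of $a$ and $b$ in $\Sigma(Z)$. Suppose, for a contradiction, that $(z,a^m)$ and $(z,b^m)$ lie in the same orbital. Then some $g=(h_1,\dots,h_m)\sigma$ fixes $z$ and maps $a^m$ to $b^m$. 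This gives $a^{h_j}=b$ and $z_j^{h_j}=z_{j\sigma}$ (up to the convention for the direction of $\sigma$), and hence
\[
(z_j,a)^Z=(z_{j\sigma},b)^Z \quad\text{for all } j .
\]
If $\sigma$ fixes some $j$, this contradicts separation in $Z$ immediately. The main obstacle is the case where $\sigma$ is fixed-point-free, for example the $2$-cycle in $C_2$ when $r(Z)=2$. There the counting argument that settles odd $\ell$ is unavailable: comparing how often each orbital occurs cannot rule out this $\sigma$. My first attempt will be to follow a cycle $(j,j\sigma,\dots)$ and compose the corresponding $h$'s. Combining this with the fact that $Z_{z_j}$ acts regularly on each regular orbital should give an element of $Z$ that maps $(z_j,a)$ to $(z_j,b)$, which would produce the contradiction. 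If that fails, I will strengthen the inductive property so that it is invariant under coordinate permutations: require that the multisets of orbitals $\{(c_j,a)^Z\}$ and $\{(c_j,b)^Z\}$ can never be matched by a permutation. I would then check that this stronger property is preserved by cyclic wreath products, using that $\Pi(\rho)$ is distinguishing.

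\textbf{Step 2 (the final group).} Put $r=r(Y)\ge2$. Then $D(S_r)=r$, so Proposition~\ref{bookkeeping} gives $b(W)=2$. The distinguishing partitions for $S_r$ are exactly those with singleton parts, so the admissible tuples $\rho$ are the tuples of pairwise distinct elements of $\mathcal R(Y)$. These form a single $S_r$-orbit, and therefore $r(W)=1$.

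\textbf{Step 3 (conclusion).} Take the separated pair $(\alpha',\beta')$ in $Y$ obtained by iterating Step~1, and consider the vertices $x=\alpha'^r$ and $y=\beta'^r$ of $\Sigma(W)$. Exactly as in the proof of Proposition~\ref{cprop}, $\rho_{x,y}$ is constant (or some coordinate pair is not a base), so $x$ and $y$ are nonadjacent. If $z$ were a common neighbour, then $(z,x)$ and $(z,y)$ would both be ordered bases of $W$. Since $r(W)=1$, they would lie in the same regular orbital. This contradicts the separation of $(x,y)$ in $W$, which is Step~1 applied with $Q=S_r$.
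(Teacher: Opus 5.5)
\paragraph{Step~1 is false for constant lifts.} Your Step~1 is not just incomplete in the fixed-point-free case. For the constant lifts $(a^m,b^m)$ the claimed inheritance of separation is false in general, so neither of your repairs can work.

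Suppose $a,b$ have a common neighbour $c$ in $\Sigma(Z)$ and some $g\in Z$ interchanges $a$ and $b$. Nothing in the hypotheses excludes this. Separation gives $\mathcal O_1=(c,a)^Z\ne(c,b)^Z=\mathcal O_2$. The point $c'=c^g$ is also a common neighbour, with $(c',a)^Z=\mathcal O_2$ and $(c',b)^Z=\mathcal O_1$. Now work in $Z\wr C_2$, which is your final layer $S_{r(Y)}$ when $r(Y)=2$, as in Section~\ref{sec:product}. The point $z=(c,c')$ has $\rho_{z,a^2}=(\mathcal O_1,\mathcal O_2)$ and $\rho_{z,b^2}=(\mathcal O_2,\mathcal O_1)$. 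Both partitions are distinguishing and the two tuples are swapped by $C_2$. By Lemma~\ref{basecrit} and Proposition~\ref{bookkeeping}, $z$ is therefore a common neighbour of $a^2$ and $b^2$ with $(z,a^2)$ and $(z,b^2)$ in the same orbital.

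\paragraph{Why your repairs fail.} Composing the $h_j$ around a cycle of $\sigma$ returns $z_j$ to itself but sends $a$ to an uncontrolled point. Your multiset strengthening fails in the example above, since both multisets are $\{\mathcal O_1,\mathcal O_2\}$.

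Counting does not settle odd $\ell$ either once $r(Z)\ge3$. If common neighbours with orbital pairs $(\mathcal O_1,\mathcal O_2)$, $(\mathcal O_2,\mathcal O_3)$, $(\mathcal O_3,\mathcal O_1)$ exist, placing them in the three coordinates gives the same failure for $C_3$.

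Step~3 is then circular. Since $r(W)=1$, separation of $(a^r,b^r)$ in $W$ simply says that they have no common neighbour. Separation in $Y$ only forces the bijections $j\mapsto(z_j,a)^Y$ and $j\mapsto(z_j,b)^Y$ to differ in every coordinate. That is exactly the derangement case you cannot handle.

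\paragraph{The missing idea.} Lift non-constantly: pair $a^m$ with $b'=(a,\dots,a,b)$, which differs from $a^m$ in a single coordinate. Let $\gamma$ be a common neighbour of $a^m$ and $b'$. The tuples $u=((\gamma_j,a)^Z)_j$ and $v=((\gamma_j,b'_j)^Z)_j$ agree in their first $m-1$ entries. If $(\gamma,a^m)$ and $(\gamma,b')$ lie in the same orbital, Proposition~\ref{bookkeeping} makes $v$ a $Q$-image of $u$. So $u$ and $v$ have the same multiset of entries, and hence $u=v$. The last entry gives $(\gamma_m,a)^Z=(\gamma_m,b)^Z$, where $\gamma_m$ is a common neighbour of $a$ and $b$. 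This contradicts separation, and the argument works for every transitive $Q$, whatever the fixed points of $\sigma$.

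At the final layer take $a^r$ and $y=(a,\dots,a,b)$. They are nonadjacent because of the coordinate pair $(a,a)$. For a common neighbour $z$, the tuples $((z_j,a)^Y)_j$ and $((z_j,y_j)^Y)_j$ each list $\mathcal R(Y)$ exactly once and agree in $r-1$ places. So they coincide, giving the same contradiction. This is the paper's argument. Your Step~2 and your nonadjacency argument are fine as they stand.
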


\begin{proof}
If some $x\in\Delta$ is such that $(x,\alpha)$ and $(x,\beta)$ are ordered bases lying in the same $X$-orbital, then $x$ is a common neighbour of $\alpha$ and $\beta$ with $(x,\alpha)^X=(x,\beta)^X$, contrary to hypothesis. Both steps below end by producing such a point. Each also uses the elementary fact that two tuples are equal if they have the same multiset of entries and agree in all but one coordinate.

We first show that the hypothesis passes up one cyclic layer. Let $\ell$ be prime and let $Y_1=X\wr C_\ell$ act on $\Delta^\ell$ in product action. Proposition~\ref{bookkeeping} gives $b(Y_1)=2$. Suppose that $\gamma$ is a common neighbour of $\alpha^\ell$ and $\beta'=(\alpha^{\ell-1},\beta)$ with $(\gamma,\alpha^\ell)$ and $(\gamma,\beta')$ in the same $Y_1$-orbital. The tuples of coordinate orbitals $u=\bigl((\gamma_j,\alpha)^X\bigr)_j$ and $v=\bigl((\gamma_j,\beta'_j)^X\bigr)_j$ agree in the first $\ell-1$ entries, and Proposition~\ref{bookkeeping} shows that the equality of the two $Y_1$-orbitals makes $v$ a cyclic rotation of $u$. Thus $u$ and $v$ have the same multiset of entries, so $u=v$ by the elementary fact above. The final entries give $(\gamma_\ell,\alpha)^X=(\gamma_\ell,\beta)^X$, which is impossible. Thus $\alpha^\ell$ and $\beta'$ satisfy the same hypothesis for $Y_1$. Iterating this argument through all the cyclic layers, we obtain a pair with the same property for $Y$. Relabelling $Y$, its domain and this pair as $X$, $\Delta$ and $(\alpha,\beta)$, respectively, we may now treat the final symmetric layer.

Now $W=X\wr S_m$ with $m=r(X)\ge2$. Since $D(S_m)=m$, a partition $\Pi(\rho)$ with $\rho\in\mathcal R(X)^m$ is distinguishing precisely when $\rho$ lists every element of $\mathcal R(X)$ exactly once, and Proposition~\ref{bookkeeping} gives $b(W)=2$ and $r(W)=1$. The pair $(\alpha^m,\beta'')$ with $\beta''=(\alpha^{m-1},\beta)$ has the coordinate pair $(\alpha,\alpha)$, so $\alpha^m$ and $\beta''$ are nonadjacent by Lemma~\ref{basecrit}. Suppose that $z$ is adjacent to both. Then $\bigl((\alpha,z_j)^X\bigr)_j$ and $\bigl((z_j,\beta''_j)^X\bigr)_j$ list every element of $\mathcal R(X)$ exactly once, and so does $\bigl((z_j,\alpha)^X\bigr)_j$, since reversal permutes the regular orbitals. The last two tuples have the same multiset of entries and agree in the first $m-1$ entries, so they are equal by the elementary fact above. The final entries give $(z_m,\alpha)^X=(z_m,\beta)^X$, which is impossible.
\end{proof}

The next lemma verifies the hypothesis of Proposition~\ref{sprop} from a condition on $K$ involving only one of its two regular orbitals. For $\rho\in\{0,1\}^m$, write $\overline\rho$ for the vector obtained by exchanging $0$ and $1$, and let the \emph{weight} of $\rho$ be the number of entries equal to $1$.

\begin{lem}\label{wordlem}
Let $K$ be transitive on $\Delta$ with $b(K)=2$, and suppose that $\mathcal R(K)$ consists of two self-paired orbitals. Let $\alpha,\beta\in\Delta$ be distinct points admitting a labelling $\mathcal R(K)=\{\mathcal O_1,\mathcal O_2\}$ such that no point $\delta$ satisfies $(\alpha,\delta)^K=(\delta,\beta)^K=\mathcal O_1$. Let $Q\le S_m$ be transitive with $D(Q)=2$. Suppose that every $\rho$ for which $\Pi(\rho)$ is distinguishing for $Q$ has weight $m/2$ and lies in a different $Q$-orbit from $\overline\rho$. Let $M=K\wr Q$ act in product action. Then $b(M)=2$, and no common neighbour $\gamma$ of $\alpha^m$ and $\beta^m$ satisfies $(\gamma,\alpha^m)^M=(\gamma,\beta^m)^M$.
\end{lem}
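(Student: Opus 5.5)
The plan is to apply Proposition~\ref{bookkeeping} to get $b(M)=2$, and then to encode the coordinate orbitals of a putative common neighbour as $0$/$1$ vectors, so that the hypotheses on $Q$ force a contradiction.

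\emph{Base size.} Since $r(K)=|\mathcal R(K)|=2=D(Q)$, Proposition~\ref{bookkeeping} gives $b(M)=2$ at once.

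\emph{Encoding a common neighbour.} Suppose $\gamma=(\gamma_1,\dots,\gamma_m)$ is a common neighbour of $\alpha^m$ and $\beta^m$ with $(\gamma,\alpha^m)^M=(\gamma,\beta^m)^M$. By Lemma~\ref{basecrit}, each $(\gamma_j,\alpha)$ and each $(\gamma_j,\beta)$ is an ordered base for $K$, so it lies in $\mathcal O_1$ or $\mathcal O_2$. Define
\[
u,v\in\{0,1\}^m,\qquad u_j=1\iff(\gamma_j,\alpha)\in\mathcal O_1,\qquad v_j=1\iff(\gamma_j,\beta)\in\mathcal O_1 .
\]
Under the labelling $\mathcal O_1\leftrightarrow 1$, $\mathcal O_2\leftrightarrow 0$, these vectors are exactly $\rho_{\gamma,\alpha^m}$ and $\rho_{\gamma,\beta^m}$. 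Lemma~\ref{basecrit} shows that both $\Pi(u)$ and $\Pi(v)$ are distinguishing for $Q$. By hypothesis, both $u$ and $v$ therefore have weight $m/2$. Proposition~\ref{bookkeeping} identifies regular orbitals of $M$ with $Q$-orbits of such tuples. Hence the assumed equality $(\gamma,\alpha^m)^M=(\gamma,\beta^m)^M$ says that $u$ and $v$ lie in the same $Q$-orbit.

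\emph{Disjoint supports.} Fix $j$ with $u_j=v_j=1$. Since $\mathcal O_1$ is self-paired, $(\gamma_j,\alpha)\in\mathcal O_1$ gives $(\alpha,\gamma_j)\in\mathcal O_1$. Also $(\gamma_j,\beta)\in\mathcal O_1$. Thus $\delta=\gamma_j$ satisfies $(\alpha,\delta)^K=(\delta,\beta)^K=\mathcal O_1$, which the hypothesis forbids. So the supports of $u$ and $v$ are disjoint. Each support has size $m/2$, so together they partition $\{1,\dots,m\}$, and therefore $v=\overline u$.

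\emph{Conclusion.} We now have $u$ and $\overline u$ in the same $Q$-orbit. This contradicts the assumption that every distinguishing $\rho$ lies in a different $Q$-orbit from $\overline\rho$.

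The only delicate point is the disjoint-supports step. It is the one place where self-pairedness is used: it is needed to turn $(\gamma_j,\alpha)\in\mathcal O_1$ into $(\alpha,\gamma_j)\in\mathcal O_1$. The rest of the argument is bookkeeping.
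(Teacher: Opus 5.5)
Your proof is correct and is essentially the paper's argument. The paper first notes that self-pairedness of $\mathcal O_1,\mathcal O_2$ makes every regular orbital of $M$ self-paired, and then compares the coordinate tuples of $(\alpha^m,\gamma)$ and $(\gamma,\beta^m)$. You instead compare $\rho_{\gamma,\alpha^m}$ and $\rho_{\gamma,\beta^m}$ directly and use self-pairedness coordinatewise in the disjoint-supports step, which amounts to the same thing. Your encoding $\mathcal O_1\leftrightarrow1$, rather than the paper's $\mathcal O_1\leftrightarrow0$, is harmless because the hypotheses on $Q$ are invariant under $\rho\mapsto\overline\rho$.
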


\begin{proof}
Since $r(K)=2=D(Q)$, Proposition~\ref{bookkeeping} gives $b(M)=2$. Reversing an ordered base for $M$ leaves its tuple of coordinate orbitals unchanged, because the two elements of $\mathcal R(K)$ are self-paired. Every element of $\mathcal R(M)$ is therefore self-paired, so it suffices to show that $(\alpha^m,\gamma)^M\ne(\gamma,\beta^m)^M$ for every common neighbour $\gamma$. Let $w,w'\in\{0,1\}^m$ encode the tuples of coordinate orbitals of $(\alpha^m,\gamma)$ and $(\gamma,\beta^m)$, where $0$ stands for $\mathcal O_1$ and $1$ for $\mathcal O_2$. Both $\Pi(w)$ and $\Pi(w')$ are distinguishing for $Q$, so $w$ and $w'$ have weight $m/2$. If some coordinate $j$ is zero in both, then $(\alpha,\gamma_j)^K=(\gamma_j,\beta)^K=\mathcal O_1$, contrary to hypothesis. The zero sets of $w$ and $w'$ are therefore disjoint, and since each has size $m/2$ they partition $\{1,\dots,m\}$. Hence $w'=\overline w$. By hypothesis $w$ and $\overline w$ lie in different $Q$-orbits, so $(\alpha^m,\gamma)^M\ne(\gamma,\beta^m)^M$.
\end{proof}

\section{An infinite family of affine counterexamples} \label{sec:affine}

Throughout, let $V$ be a finite-dimensional vector space over $\F_p$, and let $H\le\GL(V)$ be nontrivial and irreducible. We use $\mathbf0$ for the zero vector and, when coordinates have been specified, $\mathbf1$ for the all-ones vector. Set $G=V{:}H$, acting naturally on $V$. Then $G$ is primitive, since $H$ is irreducible, and nonregular, since $G_{\mathbf0}=H\ne1$.

Let
\[
V_{\mathrm{reg}}=V_{\mathrm{reg}}(H)=\{v\in V \mid H_v=1\}
\]
be the set of vectors lying in regular $H$-orbits. For subsets $A,B\subseteq V$, write $A+B=\{a+b \mid a\in A,\ b\in B\}$, and for $k\ge1$ write $kA=A+\cdots+A$ for the $k$-fold sumset; in particular, $3A=\{a+b+c \mid a,b,c\in A\}$. Note that $V_{\mathrm{reg}}=-V_{\mathrm{reg}}$, since $H_v=H_{-v}$ for a linear group.

The next lemma gives the standard translation between the Saxl graph of $G$ and sumsets of $V_{\mathrm{reg}}$. It is proved in the same way as \cite[Lemma~2.3]{LP}, where the common neighbour property of the whole of $\Sigma(G)$ is characterised. We omit the details.

\begin{lem}\label{criterion}
Suppose that $V_{\mathrm{reg}}\ne\varnothing$. Then:
\begin{enumerate}
\item[(i)] $b(G)=2$, and two distinct vectors $u,v\in V$ are adjacent in $\Sigma(G)$ if and only if $v-u\in V_{\mathrm{reg}}$.
\item[(ii)] Translation of an ordered base $(u,v)$ to $(\mathbf0,v-u)$ identifies the regular orbitals of $G$ with the regular $H$-orbits on $V$. In particular, $r(G)$ is the number of regular $H$-orbits on $V$.
\item[(iii)] Two vertices $u,v\in V$ have a common neighbour if and only if $v-u\in2V_{\mathrm{reg}}$. In particular, $G$ has the common neighbour property if and only if $2V_{\mathrm{reg}}=V$.
\item[(iv)] If $V_{\mathrm{reg}}\cup2V_{\mathrm{reg}}\ne V$ but $3V_{\mathrm{reg}}=V$, then $\diam\Sigma(G)=3$.
\end{enumerate}
\end{lem}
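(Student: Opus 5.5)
The plan is to use that the translation subgroup $V\le G$ acts regularly on $V$, so $G=V{:}H$ with $G_{\mathbf0}=H$. Every question about a pair $u,v$ can then be moved to the pair $\mathbf0,v-u$ by applying the translation $x\mapsto x-u$, which is an automorphism of $\Sigma(G)$. Everything reduces to understanding the stabiliser of $\mathbf0$ together with a second point.

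For (i), the key step is the identity $G_{\mathbf0}\cap G_w=H\cap G_w=H_w$ for $w\in V$, since $h\in H$ fixes $w$ exactly when $wh=w$. Hence $\{\mathbf0,w\}$ is a base if and only if $w\in V_{\mathrm{reg}}$. As $G$ is nonregular, $b(G)\ge2$, and $V_{\mathrm{reg}}\ne\varnothing$ gives $b(G)=2$. Translating by $-u$ sends $\{u,v\}$ to $\{\mathbf0,v-u\}$ and conjugates pointwise stabilisers, so adjacency is equivalent to $v-u\in V_{\mathrm{reg}}$.

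For (ii), I would use the fact that every $G$-orbit on ordered pairs contains pairs with first entry $\mathbf0$. Two such pairs $(\mathbf0,w)$ and $(\mathbf0,w')$ lie in the same orbital exactly when some $g\in G_{\mathbf0}=H$ maps $w$ to $w'$. So orbitals correspond to $H$-orbits on $V$, and by (i) regular orbitals correspond to $H$-orbits contained in $V_{\mathrm{reg}}$, which are precisely the regular $H$-orbits. The equality $r(G)=$ (number of regular $H$-orbits) follows immediately.

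For (iii), the plan is to show that $z$ is a common neighbour of $u$ and $v$ exactly when $z-u\in V_{\mathrm{reg}}$ and $v-z\in V_{\mathrm{reg}}$; here I use $V_{\mathrm{reg}}=-V_{\mathrm{reg}}$ and note that $\mathbf0\notin V_{\mathrm{reg}}$ since $H\ne1$, so such $z$ is automatically distinct from $u$ and $v$. Writing $v-u=(z-u)+(v-z)$ shows that a common neighbour exists if and only if $v-u\in 2V_{\mathrm{reg}}$. Applying this to all pairs of vertices gives the second statement. There is one subtlety for $u=v$: a vertex has a common neighbour with itself iff it has a neighbour, and $\mathbf0=a+(-a)\in2V_{\mathrm{reg}}$ holds automatically.

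For (iv), I would argue in the same way that $u$ and $v$ are joined by a walk of length $k$ if and only if $v-u\in kV_{\mathrm{reg}}$. The distance between $u$ and $v$ is therefore the least $k$ with $v-u\in kV_{\mathrm{reg}}$. If $3V_{\mathrm{reg}}=V$, every distance is at most $3$. If $V_{\mathrm{reg}}\cup2V_{\mathrm{reg}}\ne V$, some $w$ lies outside both sets; it is nonzero because $\mathbf0\in2V_{\mathrm{reg}}$, and $d(\mathbf0,w)=3$. There is no real obstacle here; the only care needed is the degenerate cases $u=v$ and $\mathbf0$.
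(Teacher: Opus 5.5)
Your proposal is correct and is essentially the argument the paper intends: the paper omits the proof, saying only that it goes as in \cite[Lemma~2.3]{LP}, and that argument is the same reduction by translations to pairs $(\mathbf0,w)$ via $G_{\mathbf0}\cap G_w=H_w$. You also handle the degenerate points correctly, namely $\mathbf0\notin V_{\mathrm{reg}}$ because $H\ne1$, $\mathbf0\in2V_{\mathrm{reg}}$, and walks of length $k$ from $u$ to $v$ existing exactly when $v-u\in kV_{\mathrm{reg}}$.
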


The following elementary sumset observation is also used in Section~\ref{sec:searches}.

\begin{lem}\label{triplesum}
If $A\subseteq V$ satisfies $|V\setminus(A+A)|<|A|$ then $3A=V$.
\end{lem}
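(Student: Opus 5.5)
The plan is a pigeonhole argument of the kind used in additive combinatorics. We must show that every $v\in V$ can be written as $a+b+c$ with $a,b,c\in A$. Fix an arbitrary $v\in V$ and consider the translate $v-A=\{v-a\mid a\in A\}$. Because $a\mapsto v-a$ is injective, $|v-A|=|A|$.

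By hypothesis the complement $V\setminus(A+A)$ has fewer than $|A|$ elements. So the set $v-A$, which has exactly $|A|$ elements, cannot lie entirely inside $V\setminus(A+A)$. Hence some $a\in A$ satisfies $v-a\in A+A$. Write $v-a=b+c$ with $b,c\in A$. Then $v=a+b+c\in 3A$. Since $v$ was arbitrary, $3A=V$.

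The argument needs no structure of $V$ beyond its being a finite abelian group, and it does not need $A$ to be symmetric. The proof is a single step, and the only point requiring care is the strict inequality: it is exactly what rules out $v-A\subseteq V\setminus(A+A)$. In Section~\ref{sec:searches} the lemma will be applied with $A=V_{\mathrm{reg}}$. Combined with Lemma~\ref{criterion}(iv), it yields diameter exactly $3$ whenever $2V_{\mathrm{reg}}$ misses some vector but misses fewer than $|V_{\mathrm{reg}}|$ of them.
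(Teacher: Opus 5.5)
Your proof is correct and is exactly the paper's argument: the translate $v-A$ has $|A|$ elements, so it cannot lie inside $V\setminus(A+A)$, hence it meets $A+A$ and $v\in 3A$. One small caveat concerns your closing aside, not the proof itself: Lemma~\ref{criterion}(iv) needs $V_{\mathrm{reg}}\cup 2V_{\mathrm{reg}}\ne V$, which the paper obtains from $V_{\mathrm{reg}}\subseteq 2V_{\mathrm{reg}}$ over $\F_3$; the condition $2V_{\mathrm{reg}}\ne V$ alone does not suffice.
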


\begin{proof}
For $v\in V$, the translate $v-A$ has size $|A|$, so it cannot be contained in $V\setminus(A+A)$. Thus $v-A$ meets $A+A$, and hence $v\in3A$.
\end{proof}

To construct our family of counterexamples, we identify the vector
space $U_0=\F_3^9$ with $M_3(\F_3)$. Define $D\le\GL(U_0)$ to be the
subgroup of all coordinate sign changes for which every row and every
column contains an even number of sign changes. The four signs in the upper-left $2\times2$ submatrix may be chosen freely, after which the row and column parity conditions determine the remaining five signs uniquely. Since coordinate sign changes commute
and have order two, it follows that $D\cong C_2^4$. Let
$E\cong(S_3\times S_3){:}C_2$ act by row and column permutations and
by transposition. These operations preserve the defining parity
conditions, so $E$ normalises $D$, and we set
\[
L_0=D{:}E,
\qquad
G_0=U_0{:}L_0.
\]

Let
\[
u_1=\begin{pmatrix}
0&1&2\\
1&0&1\\
1&1&1
\end{pmatrix},
\qquad
u_2=\begin{pmatrix}
0&0&2\\
0&1&1\\
1&1&1
\end{pmatrix},
\qquad
O_s=u_s^{L_0}\quad(s=1,2).
\]
The accompanying GAP computation proves the following.

\begin{lem}\label{affineinput}
The group $L_0$ acts irreducibly on $U_0$, with precisely two regular orbits, namely $O_1$ and $O_2$. Moreover:
\begin{enumerate}
\item[{(i)}] $O_1+O_1=U_0$, and $|U_0\setminus(O_2+O_2)|=32$;
\item[{(ii)}] for $s,t\in\{1,2\}$, $\mathbf1\in O_s+O_t$ if and only if $s=t=1$.
\end{enumerate}
\end{lem}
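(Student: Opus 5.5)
This is a finite computation on a $9$-dimensional space over $\F_3$, with $|U_0|=3^9=19683$ and $|L_0|=16\cdot72=1152$. The plan is to verify it by direct machine computation, organised so that each claim reduces to a small, checkable calculation.

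First I would build $L_0$ explicitly as a matrix group in $\GL_9(3)$. It is generated by two kinds of element. The first kind is the sign-change matrices for $D$: diagonal $\pm1$ matrices with the row and column parity conditions, and it suffices to take the four generators coming from the free upper-left $2\times2$ signs. The second kind is the permutation matrices for $E$, namely generators of the row $S_3$, the column $S_3$ and transposition. I would confirm that $|L_0|=1152$. Irreducibility can be checked with the MeatAxe. Conceptually it also follows from a short argument: the characters of $D$ on the nine coordinate lines are pairwise distinct, so any $D$-invariant subspace is a sum of coordinate lines. Transitivity of $E$ on the nine positions then leaves only $0$ and $U_0$.

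Next I would compute the orbits of $L_0$ on $U_0$, using for example \texttt{OrbitsDomain}. A vector lies in a regular orbit exactly when its orbit has length $1152$. I would check that exactly two orbits have this length, and that they contain $u_1$ and $u_2$ respectively, so that they are $O_1$ and $O_2$.

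Finally I would form the sumsets. Each $O_s$ has $1152$ elements, so $O_s+O_t$ can be computed naively with about $1.3\times10^6$ additions each. The work can be reduced further using $L_0$-invariance: $O_s+O_t$ is a union of $L_0$-orbits, so it is enough to compute $\{u_s+y \mid y\in O_t\}$ and take the $L_0$-orbit closure. For part (i) I would check that $O_1+O_1=U_0$ and that exactly $32$ vectors are missing from $O_2+O_2$. For part (ii) I would test membership of $\mathbf1$ in each of the three sumsets $O_1+O_1$, $O_1+O_2$ and $O_2+O_2$. By symmetry this amounts to asking whether $\mathbf1-y\in O_t$ for some $y\in O_s$.

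None of these steps is conceptually hard. The main risk is correctness of the encoding. The identification of $M_3(\F_3)$ with $\F_3^9$ must be consistent between the group action and the vectors $u_1,u_2$, and it must be right when computing row and column operations and transposition. I would guard against this by checking $|L_0|$, checking the regularity of the orbits of $u_1$ and $u_2$, and checking $|O_1|=|O_2|=1152$ before computing any sumsets.
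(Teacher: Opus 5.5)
Your proposal is correct and takes the same approach as the paper, which proves this lemma purely by an accompanying GAP computation: build $L_0\le\GL_9(3)$, find its orbits on $U_0$, and check the stated sumset facts. Your hand argument for irreducibility is also valid: the nine characters of $D$ on the coordinate lines are pairwise distinct and $E$ is transitive on the nine positions. Your reduction of $O_s+O_t$ to the $L_0$-closure of $u_s+O_t$ is a correct shortcut.
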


Since $O_1+O_1=U_0$, we have $3O_1=U_0+O_1=U_0$. Since $O_2$ is regular, $|O_2|=|L_0|=16\cdot72=1152>32$, so Lemma~\ref{triplesum} applied with $A=O_2$ gives $3O_2=U_0$. Hence
\begin{equation}\label{eq:triple}
3O_1=3O_2=U_0.
\end{equation}

Since $2V_{\mathrm{reg}}(L_0)\supseteq O_1+O_1=U_0$, Lemma~\ref{criterion}(iii) shows that $G_0$ satisfies Conjecture~\ref{bgconj} (despite having the same order, $G_0$ is not the group in the first row of Table~\ref{tab:counterexamples}). It nevertheless fails to satisfy condition $(\dagger)$: for the pair $(\mathbf0,\mathbf1)$, Lemma~\ref{affineinput}(ii) shows that $\Sigma_{G_0}(\mathbf0)$ misses the regular $(G_0)_{\mathbf1}$-orbit $\mathbf1-O_2$. By \cite[Corollary~5.7 and Proposition~5.9]{BH}, this already yields a single counterexample, namely $G_0\wr S_2$. To obtain an infinite family of counterexamples we use a different construction. Recursively define
\begin{equation}\label{eq:tower}
U_{n+1}=U_n^3,\qquad
L_{n+1}=L_n\wr C_3,\qquad
G_{n+1}=U_{n+1}{:}L_{n+1}=G_n\wr C_3,
\end{equation}
where every wreath product is taken in product action and $C_3$ is regular on three coordinates.

\begin{thm}\label{thm:affine}
For every $n\ge0$, the group $G_n$ is a primitive affine group of degree $3^{9\cdot3^n}$ with $b(G_n)=r(G_n)=2$. If $n\ge1$, the vertices $\mathbf0$ and $\mathbf1$ of $\Sigma(G_n)$ are nonadjacent and have no common neighbour, and $\diam\Sigma(G_n)=3$. In particular, the groups $G_n$ for $n\ge1$ form an infinite family of affine counterexamples to Conjecture~\ref{bgconj}.
\end{thm}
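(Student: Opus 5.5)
The plan is to prove the structural claims (primitivity, degree, $b=r=2$) by induction along the tower \eqref{eq:tower}. The no-common-neighbour claim will come from Proposition~\ref{cprop} applied to the last wreath layer $G_n=G_{n-1}\wr C_3$. The diameter claim will come from Lemma~\ref{inherit} combined with \eqref{eq:triple}.

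\emph{Structural claims.} For $n=0$, Lemma~\ref{affineinput} says $L_0$ is irreducible with exactly two regular orbits. So $G_0$ is primitive and nonregular, and Lemma~\ref{criterion}(i),(ii) give $b(G_0)=r(G_0)=2$. Inductively, if $G_n$ is primitive and nonregular, then Lemma~\ref{paprim}, with $C_3$ transitive, shows $G_{n+1}=G_n\wr C_3$ is primitive. Equation \eqref{eq:cyc} gives $r(G_{n+1})=(2^3-2)/3=2$, and Proposition~\ref{bookkeeping} (with $D(C_3)=2\le 2$) gives $b(G_{n+1})=2$. Since $\dim U_{n+1}=3\dim U_n$, we get $\dim U_n=9\cdot3^n$, which is the claimed degree.

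\emph{No common neighbour.} Take $X=G_{n-1}$, $\alpha=\mathbf0$, $\beta=\mathbf1$ in $U_{n-1}$, so that $\alpha^3=\mathbf0$ and $\beta^3=\mathbf1$ in $U_n$. We must check that $\Sigma_X(\mathbf0)=V_{\mathrm{reg}}(L_{n-1})$ meets at most one regular $X_{\mathbf1}$-orbit. These orbits are the sets $\mathbf1+P$ with $P$ a regular $L_{n-1}$-orbit. Since $V_{\mathrm{reg}}=-V_{\mathrm{reg}}$, the set $V_{\mathrm{reg}}$ meets $\mathbf1+P$ iff $\mathbf1\in V_{\mathrm{reg}}+(-P)$, and $-P$ is again a regular orbit.

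For $n-1=0$, write $-O_t=O_{\pi(t)}$ for a permutation $\pi$ of $\{1,2\}$. By Lemma~\ref{affineinput}(ii), $\mathbf1\in (O_1\cup O_2)+O_{\pi(t)}$ iff $\pi(t)=1$, which holds for exactly one $t$.

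For $n-1\ge1$, I will show that $\mathbf1\notin 2V_{\mathrm{reg}}(L_{n-1})$ at all. By Lemma~\ref{basecrit} with $x=\mathbf0$, a vector of $U_{k+1}=U_k^3$ is regular iff its three coordinates are regular and their orbit labels are not all equal. Suppose $\mathbf1=a+b$ with $a,b$ regular in $U_1$. Then each coordinate satisfies $\mathbf1=a_j+b_j$, and Lemma~\ref{affineinput}(ii) forces $a_j,b_j\in O_1$ for every $j$. The label tuple of $a$ is then constant, a contradiction.

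For higher $k$, I will carry by induction the statement "$\mathbf1\in A+B$ for regular orbits $A,B$ of $L_k$ only if $A=B=P_k$ for one fixed orbit $P_k$". For $k\ge1$ this statement holds vacuously, and the same coordinatewise argument propagates it.

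With the hypothesis verified, Proposition~\ref{cprop} gives that $\mathbf0,\mathbf1$ are nonadjacent in $\Sigma(G_n)$ with no common neighbour.

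\emph{Diameter.} For $G_0$, each regular orbital corresponds to some $O_s$, and $(\alpha,\gamma)$ lies in it iff $\gamma-\alpha\in O_s$. Equation \eqref{eq:triple} gives $\beta-\alpha\in 3O_s$, which is exactly the hypothesis of Lemma~\ref{inherit} for $X=G_0$. The conclusion of Lemma~\ref{inherit} is the same hypothesis for $W$, so iterating it up the tower shows that every regular orbital of $G_n$ connects any two points by a path of length $3$. Hence $\diam\Sigma(G_n)\le3$. The previous paragraph gives $\diam\ge3$ for $n\ge1$, so $\diam\Sigma(G_n)=3$.

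\emph{Main obstacle.} I expect the main obstacle to be the base case $n=1$. There one must correctly identify the regular $X_{\mathbf1}$-orbits and use the negation pairing $\pi$ together with Lemma~\ref{affineinput}(ii). I also need to confirm that $-\mathrm{id}\notin L_0$ does not break the count.
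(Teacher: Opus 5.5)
Your proposal is correct and follows essentially the paper's route: you verify the hypothesis of Proposition~\ref{cprop} for $(\mathbf0,\mathbf1)$ in $G_0$ via Lemma~\ref{affineinput}(ii) and negation, and you get $\diam\Sigma(G_n)\le3$ from \eqref{eq:triple} and Lemma~\ref{inherit}, exactly as the paper does. The only difference is bookkeeping: the paper applies Proposition~\ref{cprop} once at the bottom layer and then lifts constant tuples coordinatewise, whereas you apply it at the top layer, where for $n\ge2$ its hypothesis holds vacuously by that same coordinatewise argument.
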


\begin{proof}
Since $\dim_{\F_3}U_0=9$ and $\dim U_{n+1}=3\dim U_n$ by \eqref{eq:tower}, the degree of $G_n$ is $|U_n|=3^{9\cdot3^n}$. Lemma~\ref{affineinput} shows that $L_0$ is irreducible, so $G_0$ is primitive. Since $L_0\ne1$ and $L_{n+1}=L_n\wr C_3$, every $L_n$ is nontrivial, so $G_n=U_n{:}L_n$ is nonregular. Thus, if $G_n$ is primitive, then Lemma~\ref{paprim} shows that $G_{n+1}=G_n\wr C_3$ is primitive, since $C_3$ is transitive on three points. Moreover, $U_n$ is a regular elementary abelian normal subgroup of $G_n$, so $G_n$ is of affine type. Lemmas~\ref{criterion} and~\ref{affineinput} give $b(G_0)=r(G_0)=2$. Suppose that $b(G_n)=r(G_n)=2$. Since $D(C_3)=2$, Proposition~\ref{bookkeeping} gives $b(G_{n+1})=2$ and
\[
r(G_{n+1})=\frac{r(G_n)^3-r(G_n)}{3}=\frac{2^3-2}{3}=2.
\]

Now suppose that $\gamma$ is a common neighbour of $\mathbf0$ and $\mathbf1$ in $\Sigma(G_0)$. By Lemma~\ref{criterion}, there are $s,t\in\{1,2\}$ such that $\gamma\in O_s$ and $\mathbf1-\gamma\in O_t$. Since $\mathbf1=\gamma+(\mathbf1-\gamma)$, Lemma~\ref{affineinput}(ii) gives $s=t=1$. It follows that
\[
\Sigma_{G_0}(\mathbf0)\cap\Sigma_{G_0}(\mathbf1)\subseteq\mathbf1-O_1.
\]
Since scalar multiplication by $-1$ centralises $L_0$, the set $-O_1$ is again a single regular $L_0$-orbit, so $\mathbf1-O_1$ is a single regular $(G_0)_{\mathbf1}$-orbit. The vertices adjacent to $\mathbf1$ are precisely those lying in regular $(G_0)_{\mathbf1}$-orbits, so $\Sigma_{G_0}(\mathbf0)$ meets at most one such orbit and the pair $(\mathbf0,\mathbf1)$ satisfies the hypothesis of Proposition~\ref{cprop}. That proposition shows that the corresponding constant all-zero and all-ones vectors in $U_1$ are nonadjacent and have no common neighbour. These two properties pass to constant lifts from $G_n$ to $G_{n+1}$: nonadjacency follows from Lemma~\ref{basecrit}, and every coordinate of a common neighbour of the two lifts would be a common neighbour of $\mathbf0$ and $\mathbf1$ in $\Sigma(G_n)$. Thus the corresponding vectors in $U_n$ are nonadjacent and have no common neighbour for every $n\ge1$. Hence $\diam\Sigma(G_n)\ge3$ for $n\ge1$.

For the reverse inequality, let $\mathcal O_i$ be the regular orbital of $G_0$ corresponding to $O_i$, where $i\in\{1,2\}$. Given $x,y\in U_0$, \eqref{eq:triple} gives $a,b,c\in O_i$ such that $y-x=a+b+c$. Hence $(x,x+a),(x+a,x+a+b),(x+a+b,y)\in\mathcal O_i$. Repeated application of Lemma~\ref{inherit} shows that any two vertices of $\Sigma(G_n)$ are joined by a path of length at most three.
\end{proof}

We conclude this section with one further affine counterexample, whose point stabiliser is perfect.

\begin{prop}\label{prop:perfect}
Let $V$ be the deleted permutation module over $\F_3$ for the natural action of $H=\AGL_4(2)$ on $16$ points. Then $G=V{:}H$ is primitive of degree $3^{15}$, with $b(G)=2$ and $\diam\Sigma(G)=3$. 
\end{prop}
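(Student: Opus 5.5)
Our plan is to reduce everything to Lemma~\ref{criterion} and Lemma~\ref{triplesum}, with the group-theoretic input supplied by an explicit computation in the style of Lemma~\ref{affineinput}.

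First we identify the module. The permutation module $\F_3^{16}$ has the all-ones vector $\mathbf1$ and the augmentation submodule of sum-zero vectors. Since $3\nmid16$, these split, and $V$ is the $15$-dimensional sum-zero submodule, which is isomorphic to the quotient by $\langle\mathbf1\rangle$. Because $\AGL_4(2)$ is $2$-transitive (indeed $3$-transitive) on $16$ points and $3\nmid 16$, the deleted permutation module is irreducible. This is the standard result that the heart of the permutation module of a $2$-transitive group is irreducible when the characteristic does not divide the degree; if necessary we will simply confirm it in GAP. Hence $G=V{:}H$ is primitive of degree $3^{15}$. It is nonregular because $H\ne1$.

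Next we need $V_{\mathrm{reg}}\ne\varnothing$, which by Lemma~\ref{criterion}(i) gives $b(G)=2$. We represent a vector of $V$ as a function $f\colon\F_2^4\to\F_3$ with $\sum f=0$, and we find a function whose three level sets $f^{-1}(0),f^{-1}(1),f^{-1}(2)$ have trivial joint setwise stabiliser in $\AGL_4(2)$. For example, we take level sets of sizes $5,5,6$ in suitably generic position. These sizes sum to $16\equiv1\pmod 3$, so the sum of $f$ is $0\cdot5+1\cdot5+2\cdot6\equiv 17\equiv2$, and we must adjust the sizes to make the sum zero. This is a finite search, and a random choice should work since $|H|=322560$ is far smaller than $3^{15}$.

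The diameter claim is the real content. By Lemma~\ref{criterion}(iv), we must show two things: that $V_{\mathrm{reg}}\cup2V_{\mathrm{reg}}\ne V$, and that $3V_{\mathrm{reg}}=V$.

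For the upper bound, we compute $|V\setminus 2V_{\mathrm{reg}}|$ by orbit enumeration. Both sets are unions of $H$-orbits and $-1$ commutes with $H$, so we only need to test one representative per $H$-orbit on $V$, of which there are about $3^{15}/|H|\approx 45$ up to small orbits. The set $V_{\mathrm{reg}}$ contains at least one orbit of size $|H|$. So once we check that fewer than $322560$ vectors lie outside $2V_{\mathrm{reg}}$, Lemma~\ref{triplesum} gives $3V_{\mathrm{reg}}=V$.

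For the lower bound, we must exhibit $v\notin V_{\mathrm{reg}}\cup2V_{\mathrm{reg}}$. The natural candidate is a highly symmetric vector, such as the image of the indicator function of an affine hyperplane or of a point, or the vector $\mathbf1_P-\mathbf1_{P'}$ for points $P,P'$. We then verify that for every regular orbit representative $u$, the vector $v-u$ is not regular. Stabiliser checks reduce this to checking $v-u$ for $u$ running over orbit representatives of the stabiliser $H_v$ on $V_{\mathrm{reg}}$.

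The main obstacle is this nonmembership certificate, since it is a statement about all of $V_{\mathrm{reg}}$ rather than a single witness. We would first try to argue it by hand for $v$ the indicator of a point, normalised to sum zero. A decomposition $v=a+b$ with $a,b$ both regular forces strong constraints on the level-set partitions of $a$ and $b=v-a$, since these agree off one point. We would aim to show that the level sets of $a$ and $v-a$ are then simultaneously stabilised by some nontrivial element of $H$ fixing that point, for instance via $\GL_4(2)$ acting on the translates. If this hand argument fails, we fall back on the GAP orbit computation.
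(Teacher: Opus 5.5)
Your plan is essentially the paper's proof: a computation establishes faithfulness and irreducibility, identifies the regular vectors, and gives the count $|V\setminus(V_{\mathrm{reg}}\cup2V_{\mathrm{reg}})|=32<|H|$, and this feeds into Lemmas~\ref{criterion} and~\ref{triplesum}. The paper also notes that $V_{\mathrm{reg}}\subseteq2V_{\mathrm{reg}}$, since $v=2v+2v$, so that single count gives both diameter bounds and you do not need a separate nonmembership certificate for a point-indicator vector.

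One correction concerns irreducibility. The ``standard result'' you invoke is false in general: over $\F_2$, the heart of the permutation module of $\operatorname{PSL}_3(2)$ on $7$ points contains the $3$-dimensional simplex code, although $2\nmid7$. So you do need the GAP check. Alternatively, use the Clifford argument over the translation subgroup $\F_2^4$, whose $15$ nontrivial $\pm1$-valued characters are permuted transitively by $\GL_4(2)$, exactly as in Lemma~\ref{genmodule}.
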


\begin{proof}
The accompanying computation \cite{AlunaAdam} verifies that $H$ acts faithfully and irreducibly on $V$, that the regular vectors form a single orbit, and that $|V\setminus(V_{\mathrm{reg}} \cup 2V_{\mathrm{reg}})|=32$. Thus $G$ is primitive of degree $3^{15}$ and $|V_{\mathrm{reg}}|=|H|$. Moreover, $V_{\mathrm{reg}}\subseteq2V_{\mathrm{reg}}$, since $v=2v+2v$ and $2v\in V_{\mathrm{reg}}$ whenever $v\in V_{\mathrm{reg}}$. Therefore, Lemmas~\ref{criterion} and~\ref{triplesum} give $b(G)=2$ and $\diam\Sigma(G)=3$.
\end{proof}

\section{An infinite family of product type counterexamples} \label{sec:product}

Let $K=\operatorname{PSL}_2(11)$ act primitively on the $55$ cosets of a dihedral subgroup $J$ of order $12$. By \cite[Table~4]{BH}, this action has $b(K)=r(K)=2$. Let
\[
Q=\{(a,b)\in S_4\times S_4 \mid  ab^{-1}\in A_4\}
\cong(A_4\times A_4){:}C_2.
\]
Via the action on $A_4$ given by $x\mapsto axb^{-1}$, this is a faithful transitive soluble permutation group of degree $12$. The accompanying GAP computation proves the following.

\begin{lem}\label{productinput}
The two elements of $\mathcal R(K)$ are self-paired. Moreover:
\begin{enumerate}
\item[{(i)}] there are distinct points $\alpha,\beta$, and a labelling of $\mathcal R(K)$ as $\{\mathcal O_1,\mathcal O_2\}$, such that there is no point $\delta$ with
\[
(\alpha,\delta)^K=(\delta,\beta)^K=\mathcal O_1;
\]
\item[{(ii)}] for each $\mathcal O\in\mathcal R(K)$ and all points $x,y$, there are points $u,v$ with $(x,u),(u,v),(v,y)\in\mathcal O$;
\item[{(iii)}] the binary vectors $\rho\in\{0,1\}^{12}$ for which $\Pi(\rho)$ is distinguishing for $Q$ form exactly two $Q$-orbits; every such vector has weight six, and $\rho$ and $\overline\rho$ lie in different orbits.
\end{enumerate}
\end{lem}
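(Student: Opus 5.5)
This lemma is a finite computational statement about a fixed action of degree $55$ and a fixed permutation group of degree $12$. I would verify it directly in GAP, organising the computation so that each clause reduces to a small check on orbit representatives.

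The first step sets up $K=\operatorname{PSL}_2(11)$ on the $55$ cosets of $J\cong D_{12}$ via \texttt{FactorCosetAction}. Fix a point $\omega$ and compute the orbits of $K_\omega$. Since $|K_\omega|=12$ and $r(K)=2$ by \cite[Table~4]{BH}, exactly two suborbits have length $12$; call them $\Gamma_1,\Gamma_2$. These determine $\mathcal R(K)$. An orbital is self-paired if and only if, for $\gamma\in\Gamma_i$, some element of $K$ swaps $\omega$ and $\gamma$. Equivalently, $\omega$ lies in the suborbit of $K_\gamma$ of the same label. I would test this by transporting $\gamma$ to $\omega$ with an element $g$ and checking that $\omega^g\in\Gamma_i$.

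For (i) and (ii), first tabulate, for every ordered pair $(x,y)$, the label of $(x,y)^K$ (or "not a base"). This is a $55\times55$ table obtained by moving $x$ to $\omega$. For (i), search over the $54$ choices of $\beta\ne\alpha=\omega$ and both labellings for one pair with no intermediate $\delta$ of the required form. For (ii), by transitivity it suffices to take $x=\omega$. Compute the $\mathcal O$-neighbourhood $N$ of $\omega$, then $N$ composed with itself, then composed again, and check that the result is all $55$ points. This uses the table as an adjacency matrix for each label, so cubing a $0/1$ matrix suffices.

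For (iii), build $Q$ as the permutation group on the $12$ elements of $A_4$ under $x\mapsto axb^{-1}$, and check that it is faithful and transitive. For each of the $2^{12}$ binary vectors $\rho$, the partition $\Pi(\rho)$ has two parts $Z,Z^c$. It is distinguishing exactly when $\operatorname{Stab}_Q(Z)\cap\operatorname{Stab}_Q(Z^c)=\operatorname{Stab}_Q(Z)$ is trivial. Filter these vectors, record their weights, and compute the $Q$-orbits on subsets. Then confirm that there are exactly two orbits, that all weights equal six, and that the complement of a representative of one orbit lies in the other orbit.

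The main obstacle is modest. Clause (iii) is the only part involving a nontrivial enumeration, and even it is $4096$ small stabiliser computations. The real care lies in fixing conventions consistently: the orbitals must be ordered pairs, and the direction in (i) must match how Lemma~\ref{wordlem} uses $(\alpha,\delta)$ and $(\delta,\beta)$. Since the orbitals are self-paired, though, that direction ambiguity disappears once self-pairing is established.
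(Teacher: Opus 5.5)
Your plan is correct and takes the same approach as the paper, which also establishes this lemma purely by an accompanying GAP computation. Your reductions are all sound: you fix $\alpha=\omega$ and $x=\omega$ by transitivity, test self-pairedness with a single transporting element, and check only one representative's complement in (iii), which suffices because complementation is a $Q$-equivariant involution on the distinguishing vectors and so must swap the two orbits.
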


Define
\begin{equation}\label{eq:patower}
M_0=K\wr Q,\qquad
M_{n+1}=M_n\wr C_3,\qquad
G_n=M_n\wr C_2\quad(n\ge0),
\end{equation}
where the cyclic groups act regularly and every wreath product acts in product action.

\begin{thm} \label{thm:product}
For every $n\ge0$, the group $G_n$ is a primitive group of product type having degree $55^{\,24\cdot3^n}$ and soluble point stabilisers, with $b(G_n)=2$. Furthermore, $r(G_n)=1$, so the group $G_n$ is transitive on the arcs of $\Sigma(G_n)$. The graph $\Sigma(G_n)$ contains two nonadjacent vertices with no common neighbour, and $\diam\Sigma(G_n)=3$. In particular, the groups $G_n$ form an infinite family of product type counterexamples to Conjecture~\ref{bgconj}.
\end{thm}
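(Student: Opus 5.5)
We follow the pattern of Theorem~\ref{thm:affine}, chaining Lemma~\ref{wordlem}, Proposition~\ref{sprop} and Lemma~\ref{inherit}, using Lemma~\ref{productinput} as the computational input.

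\textbf{Structure, degree and primitivity.} The degree of $K$ is $55$. The group $M_0$ acts on $55^{12}$ points, so $M_n$ acts on $55^{12\cdot3^n}$ points and $G_n$ on $55^{24\cdot 3^n}$ points. For primitivity we apply Lemma~\ref{paprim} repeatedly. The group $K$ is primitive and not regular, and $Q$, $C_3$ and $C_2$ are transitive. Each intermediate group is nonregular because its point stabiliser contains $J^{12}\ne1$. Unwinding the iterated wreath product, $G_n$ is a subgroup of $K\wr S_{24\cdot3^n}$ in product action containing $\operatorname{PSL}_2(11)^{24\cdot3^n}$ as socle, so $G_n$ is of product type. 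The point stabiliser is an iterated wreath-type extension of $J^{24\cdot3^n}$ by $Q$, $C_3$'s and $C_2$. Since $J$ and $Q\cong(A_4\times A_4){:}C_2$ are soluble, the stabiliser is soluble.

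\textbf{Base sizes and regular orbitals.} Lemma~\ref{productinput}(iii) says that the distinguishing binary vectors form exactly two $Q$-orbits. Hence $D(Q)=2$, and by Proposition~\ref{bookkeeping} we get $b(M_0)=2$ and $r(M_0)=2$. By \eqref{eq:cyc} with $\ell=3$, we have $r(M_{n+1})=(8-2)/3=2$ and $b(M_{n+1})=2$ for all $n$. Since $D(S_2)=2=r(M_n)$, the group $G_n=M_n\wr S_2$ (noting $C_2=S_2$) is exactly the final layer in Proposition~\ref{sprop}.

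\textbf{Counterexample.} Lemma~\ref{productinput}(i) and (iii) are precisely the hypotheses of Lemma~\ref{wordlem} with $m=12$. Therefore $\alpha^{12},\beta^{12}$ in $M_0$ have no common neighbour $\gamma$ with $(\gamma,\alpha^{12})^{M_0}=(\gamma,\beta^{12})^{M_0}$. Here $r(M_0)=2$, the points are distinct, and every $M_n$ has two regular orbitals. So Proposition~\ref{sprop}, with $X=M_0$ and $Y=M_n$ obtained by $n$ cyclic layers, gives $b(G_n)=2$, $r(G_n)=1$, and two nonadjacent vertices with no common neighbour. Hence $\diam\Sigma(G_n)\ge3$. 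Because $r(G_n)=1$, all arcs of $\Sigma(G_n)$ lie in the single regular orbital, so $G_n$ is arc-transitive.

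\textbf{Upper bound on the diameter.} This is the delicate step, since Lemma~\ref{inherit} gives a path inside a \emph{fixed} regular orbital. We need the hypothesis of Lemma~\ref{inherit} at every level.
\begin{itemize}
\item[(a)] For $K$, it is Lemma~\ref{productinput}(ii).
\item[(b)] Lemma~\ref{inherit} itself gives this property for $M_0=K\wr Q$ and every $\mathcal P\in\mathcal R(M_0)$, since its conclusion has the same form as its hypothesis.
\item[(c)] Iterating through the $C_3$ layers, and using $b(M_n)=2$, gives the property for each $M_n$.
\item[(d)] A final application to $M_n\wr S_2$ yields $\diam\Sigma(G_n)\le3$.
\end{itemize}
The main things to check are that $b(W)=2$ holds at each stage, which was established above, and that the degree count and product-type classification are stated correctly. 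Everything else is bookkeeping.
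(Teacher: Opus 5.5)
Your proposal is correct and follows essentially the same route as the paper: Lemma~\ref{productinput} feeds Lemma~\ref{wordlem}, with the self-paired hypothesis coming from the first sentence of Lemma~\ref{productinput}. Proposition~\ref{sprop} is then applied through the $n$ cyclic layers to $M_n\wr S_2$, and Lemma~\ref{inherit} is applied layer by layer to get $\diam\Sigma(G_n)\le3$. The only cosmetic difference is that the paper obtains primitivity, the socle $K^{24\cdot3^n}$ and the soluble stabiliser $J^{24\cdot3^n}{:}P_n$ by identifying $G_n$ with $K\wr P_n$, where $P_n=(Q\wr C_3\wr\cdots\wr C_3)\wr C_2$ is the iterated imprimitive wreath product, whereas you apply Lemma~\ref{paprim} one layer at a time.
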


\begin{proof}
With $Q_0=Q$, let $Q_{n+1}=Q_n\wr C_3$ and $P_n=Q_n\wr C_2$, both in imprimitive action; then $P_n$ is transitive of degree $24\cdot3^n$. Since $(K\wr Q)\wr R\cong K\wr(Q\wr R)$ in product action, with $Q\wr R$ acting imprimitively, iterating \eqref{eq:patower} shows that $G_n$ is permutation isomorphic to $K\wr P_n$ in product action. Since $K$ is primitive and nonregular and $P_n$ is transitive, Lemma~\ref{paprim} shows that $G_n$ is primitive. Its degree is $55^{\,24\cdot3^n}$. Its socle is $K^{\,24\cdot3^n}$ with $K$ simple, so $G_n$ is of product type (see \cite{LPS} or \cite[Section~2.2]{BH}). Its point stabiliser is $J^{\,24\cdot3^n}{:}P_n$. This is soluble, since $J$ is dihedral and hence soluble, and $P_n$ is an iterated wreath product of the soluble groups $Q$, $C_3$ and $C_2$.

Since $r(K)=2$ we may identify $\mathcal R(K)^{12}$ with $\{0,1\}^{12}$, so by Lemma~\ref{productinput}(iii) the distinguishing tuples form two $Q$-orbits. Hence $D(Q)=2$, and Proposition~\ref{bookkeeping} gives $b(M_0)=r(M_0)=2$; since $D(C_3)=2$, a simultaneous induction using Proposition~\ref{bookkeeping} and \eqref{eq:cyc} gives $b(M_n)=r(M_n)=2$ for all $n$. The two regular orbitals of $K$ are self-paired, so Lemma~\ref{wordlem}, applied with $m=12$, shows that no common neighbour $\gamma$ of $\alpha^{12}$ and $\beta^{12}$ in $\Sigma(M_0)$ satisfies $(\gamma,\alpha^{12})^{M_0}=(\gamma,\beta^{12})^{M_0}$. Since $r(M_n)=2$, the last layer of \eqref{eq:patower} reads $G_n=M_n\wr S_{r(M_n)}$. Proposition~\ref{sprop} therefore applies to $M_0$, with the $n$ wreath products by $C_3$ in between, and gives $b(G_n)=2$, $r(G_n)=1$, and two nonadjacent vertices of $\Sigma(G_n)$ with no common neighbour. Hence $\diam\Sigma(G_n)\ge3$.

Finally, Lemma~\ref{productinput}(ii) supplies the hypothesis of Lemma~\ref{inherit} for $K$. Applying Lemma~\ref{inherit} successively through the layers of \eqref{eq:patower} gives $\diam\Sigma(G_n)\le 3$.
\end{proof}

\section{An infinite family of twisted wreath type counterexamples} \label{sec:tw}

Let $P=S_6$ act naturally on $\{1,\dots,6\}$, let $E=P_{\{1\}}\cong S_5$, and let $T=\operatorname{soc}(E)\cong A_5$. Write $\varphi:E\to\operatorname{Aut}(T)$ for the conjugation action. By \cite[Example~9.3]{F}, the twisted wreath product
\[
H=T\operatorname{twr}_{\varphi}P=N_0{:}P
\]
is primitive of twisted wreath type, with regular socle $N_0\cong T^6$ and degree $60^6$.

We use the coordinate model of \cite[Section~4]{F}, with all actions on the right. Let $a_1=\operatorname{id}$ and $a_i=(1\,i)$ for $2\le i\le6$, so that $\{a_1,\dots,a_6\}$ is a left transversal for $E$ in $P$ satisfying $i^{a_i}=1$; evaluation on it identifies $N_0$ with $T^6$, where $T$ acts on $\{2,\dots,6\}$. Every element of $H$ is uniquely $pn$ with $p\in P$ and $n\in N_0$, and acts on $N_0$ by $z\mapsto z^pn$.

The computational input for this section is the number of regular suborbits of $H$, together with a pair of points to which Proposition~\ref{sprop} applies.

\begin{lem}\label{twinput}
We have $b(H)=2$ and $r(H)=64\,790\,243$. Furthermore, there exists an element $w\in N_0$ such that the pair $(1,w)$ satisfies the hypothesis of Proposition~\ref{sprop}.
\end{lem}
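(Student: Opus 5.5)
Lemma~\ref{twinput} is a computational statement, so the plan is to reduce each claim to a finite calculation inside $H$ that can be carried out in the coordinate model of \cite[Section~4]{F}. The first step is to fix the point stabiliser. Since $N_0$ is regular, we identify the points with elements of $N_0$. The stabiliser of the identity $1\in N_0$ is then $P\cong S_6$, acting on $N_0\cong T^6$ by $z\mapsto z^p$. Concretely, $p$ permutes the six coordinates as $S_6$ permutes the cosets $a_iE$, and it twists each coordinate by $\varphi$ of the appropriate element of $E$.

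The second step handles regularity. Because $N_0$ is regular and normal, $\{1,z\}$ is a base if and only if $C_P(z)=P_z=1$ for this twisted action. Hence the regular suborbits are the regular $P$-orbits on $N_0$, exactly as in Lemma~\ref{criterion}. To show $b(H)=2$, it suffices to exhibit one $z$ with $P_z=1$. To compute $r(H)$, we count fixed points and apply M\"obius inversion on the subgroup lattice of $S_6$:
\[
r(H)=\frac{1}{720}\sum_{R\le S_6}\mu(1,R)\,\bigl|\operatorname{Fix}_{N_0}(R)\bigr|.
\]
Only subgroups $R$ with $\mu(1,R)\neq0$ contribute, and one could restrict further to minimal subgroups using the standard Hall-type formula. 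Each fixed-point count $|\operatorname{Fix}_{N_0}(R)|$ is computed orbit-by-orbit on coordinates. For an $R$-orbit $\mathcal{C}$ on $\{1,\dots,6\}$, a fixed vector is determined by its value at one coordinate $i\in\mathcal{C}$. That value must lie in the centraliser in $T$ of the image under $\varphi$ of the stabiliser of $i$ in $R$, transported to $E$. So $|\operatorname{Fix}_{N_0}(R)|$ is a product of centraliser orders in $A_5$, each of which is easy to compute. Summing these contributions, the identity term contributes $60^6/720$, and the result should come out to $64\,790\,243$.

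The third step, which I expect to be the main obstacle, is finding $w$ and verifying the hypothesis of Proposition~\ref{sprop}: no common neighbour $\gamma$ of $1$ and $w$ satisfies $(\gamma,1)^H=(\gamma,w)^H$. Translating by $\gamma^{-1}$ reduces this to the following condition. For regular $z$, write $[z]$ for its $P$-orbit. We need that there is no $\gamma$ for which
\[
\text{both } \gamma^{-1} \text{ and } \gamma^{-1}w \text{ are regular, with } [\gamma^{-1}]=[\gamma^{-1}w].
\]
Equivalently, there is no regular $z$ and $p\in P$ with $w=z^{-1}z^p$ and $z^p$ regular; the last condition is automatic once $z$ is regular. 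So the requirement is that $w$ is not of the form $z^{-1}z^p$ with $z$ regular and $p\in P$.

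To find such a $w$, note that $p=1$ gives $z^{-1}z^p=1$, so $w\ne1$ is needed. The natural search is for $w$ lying outside the set
\[
\bigcup_{p\ne1}\{z^{-1}z^p : z \text{ regular}\}.
\]
Up to conjugacy it suffices to let $p$ run over the $10$ nontrivial class representatives of $S_6$, together with conjugation of $w$. The hard part is computational. Each map $z\mapsto z^{-1}z^p$ has image of size roughly $|N_0|/|C(p)|$, so the union is large, and we must exhibit $w$ in the complement. I would first try $w$ with many identity coordinates, or coordinates of small support in $T$. Such $w$ are heavily constrained, and membership reduces to coordinatewise equations: the $p$-orbit structure on coordinates forces twisted-commutator equations in $A_5$ along each cycle of $p$. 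We check that each of these systems forces $z$ to be nonregular, which would certify $w$ by a finite case analysis over the classes of $p$.
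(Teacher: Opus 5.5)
Your Steps 1 and 2 are sound. The argument for $b(H)=2$ is the paper's. Your M\"obius-inversion count of $r(H)$ is correct, with $|\operatorname{Fix}_{N_0}(R)|$ computed as a product over the $R$-orbits $i^R$ of $|C_T(a_i^{-1}R_ia_i)|$. It is a more transparent route than the paper's, which simply cites a machine count, but you never evaluate it. Your reduction in Step 3 is also correct: the hypothesis of Proposition~\ref{sprop} for $(1,w)$ says exactly that $w\ne z^{-1}z^p$ for all regular $z$ and all $p\in P$. In the paper's model, where $N_0$ acts by right translations, this reads $z^pw\ne z$, and the set of excluded $w$ is the same.

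The gap is that the last clause of the lemma is an existence statement, and you never produce $w$. The proposal ends with a search strategy, with no witness and no reason the search succeeds. The paper exhibits an explicit $w$ and verifies a \emph{stronger} condition than yours: $z^pw\ne z$ for all $z\in N_0$ and $p\in P$, regular or not. Dropping regularity is what makes the certificate finite and local. The equation splits into the conditions $z_{i^p}=z_i^{e_{p,i}}w_{i^p}$ along the cycles of $p$, and it suffices that for each $p$ some cycle admits none of the $60$ starting values. Your certificate, that every solution is nonregular, does not split over cycles, because regularity is a global property. You would have to enumerate the whole solution set $\operatorname{Fix}_{N_0}(p)\,z_0$ for each $p$.

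Your search heuristic also points the wrong way. On a cycle of $p$ where $w$ is trivial, taking $z$ trivial along that cycle is consistent, so such cycles never obstruct. For example, suppose $w$ is supported on a single coordinate $i$. Choose $e,y\in T$ with $w_i=e^{-1}y^{-1}ey$, which is possible since every element of $A_5$ is a commutator. Then take $p$ in the stabiliser $P_i$ with $e_{p,i}=e$; this exists because $p\mapsto a_i^{-1}pa_i$ maps $P_i$ isomorphically onto $E$. The vector with $i$th entry $y$ and all other entries $1$ then satisfies $z^pw=z$. By contrast, the paper's $w$ has all six coordinates nontrivial.
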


\begin{proof}
The value of $r(H)$ and the finite check below are verified by \cite{AlunaAdam}. The stabiliser of the point $1$ is $P$, so no single point is a base, while $r(H)\ge1$ supplies a base of size two; hence $b(H)=2$. Let
\[
w=\bigl((2\,3\,6),\,(2\,4\,5\,3\,6),\,(2\,6\,5),\,
(2\,3\,5\,4\,6),\,(3\,4\,5),\,(4\,5\,6)\bigr)\in N_0.\]
The pair $(1,w)$ satisfies the hypothesis of Proposition~\ref{sprop} provided no element of $H$ carries $(\gamma,1)$ to $(\gamma,w)$ for any $\gamma\in N_0$. Write such an element of $H$ as $pn$ with $p\in P$ and $n\in N_0$. Since $1^{pn}=n$, the second coordinate forces $n=w$, and the first is then fixed precisely when $\gamma^pw=\gamma$. So it suffices to show that $z^pw\ne z$ for all $z\in N_0$ and $p\in P$.

For $p\in P$ and $1\le i\le6$, set $e_{p,i}=a_i^{-1}pa_{i^p}\in E$. Then $z^pw=z$ is equivalent to $z_{i^p}=z_i^{e_{p,i}}w_{i^p}$ for every $i$. Along a cycle of $p$, one entry determines the rest, subject to a single condition when the cycle closes. As there are only $|T|=60$ choices for the initial entry, each cycle can be tested directly, and for every $p\in P$ some cycle admits no consistent value.
\end{proof}

The group $H$ is not itself a counterexample. Indeed, the neighbours of $1$ in $\Sigma(H)$ are the points lying in regular $P$-orbits. Write $S$ for the union of these orbits, so that $|S|=|P|\,r(H)$. Since $N_0$ acts on itself by right multiplication, the neighbours of $x\in N_0$ are the elements of $Sx$. As $|S|>\tfrac12|N_0|$, the sets $Sx$ and $Sy$ meet for all $x,y\in N_0$, so $H$ has the common neighbour property.

Write $c_0=r(H)$ and $c_{n+1}=(c_n^2-c_n)/2$. Recursively define
\begin{equation}\label{eq:twtower}
H_0=H,\qquad
H_{n+1}=H_n\wr C_2,\qquad
G_n=H_n\wr S_{c_n},
\end{equation}
where every wreath product is taken in product action and $C_2$ is regular on two coordinates.

\begin{thm}\label{twfamilythm}
For every $n\ge0$, the group $G_n$ is a primitive group of twisted wreath type, of degree $60^{\,6\cdot2^nc_n}$, with $b(G_n)=2$ and $r(G_n)=1$. The graph $\Sigma(G_n)$ contains two nonadjacent vertices with no common neighbour, so $\diam\Sigma(G_n)\ge3$. In particular, the groups $G_n$ form an infinite family of twisted wreath type counterexamples to Conjecture~\ref{bgconj}.
\end{thm}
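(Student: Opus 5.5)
We follow the proof of Theorem~\ref{thm:product}, now with Lemma~\ref{twinput} and Proposition~\ref{sprop} in place of Lemma~\ref{wordlem}. The first step is structural. Write $Q_0=1$, $Q_{n+1}=Q_n\wr C_2$ (imprimitive action, degree $2^n$) and $P_n=Q_n\wr S_{c_n}$. Iterating $(K\wr Q)\wr R\cong K\wr(Q\wr R)$ shows that $G_n$ is permutation isomorphic to $H\wr P_n$ in product action. Here $P_n$ is transitive of degree $2^nc_n$, so the degree is $(60^6)^{2^nc_n}=60^{6\cdot2^nc_n}$.

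For primitivity, $H$ is primitive by \cite[Example~9.3]{F}, and it is nonregular because $H_1=P\ne1$. Lemma~\ref{paprim} then gives primitivity of $G_n$, provided $P_n$ is nontrivial. This requires $c_n\ge2$ (or $n\ge1$), which the next step supplies. For the O'Nan--Scott type we use the standard fact that a product action wreath product $H\wr P_n$, with $H$ primitive of twisted wreath type and $P_n$ transitive, is again of twisted wreath type: its socle $N_0^{2^nc_n}\cong T^{6\cdot2^nc_n}$ is regular and nonabelian. Since this is the one place where we appeal to the classification literature rather than to earlier lemmas, we would cite \cite[Section~2.2]{BH} or \cite{LPS}, or check directly that the point stabiliser $P\wr P_n$ acts transitively on the simple factors with the appropriate twisted structure.

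Next comes the base-size bookkeeping, by induction on $n$ using Proposition~\ref{bookkeeping} and \eqref{eq:cyc} with $\ell=2$. Lemma~\ref{twinput} gives $b(H_0)=2$ and $r(H_0)=c_0$. Since $D(C_2)=2\le c_n$, we get $b(H_{n+1})=2$ and $r(H_{n+1})=(c_n^2-c_n)/2=c_{n+1}$. Because $c_0=64\,790\,243$ is large and $c\mapsto(c^2-c)/2$ is increasing for $c\ge3$, every $c_n\ge2$. In particular every group $H_0,\dots,H_n$ has at least two regular orbitals, and $G_n=H_n\wr S_{r(H_n)}$.

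Finally we apply Proposition~\ref{sprop} with $X=H$, with the pair $(1,w)$ from Lemma~\ref{twinput}, and with $n$ cyclic layers of prime order $\ell=2$. It yields $b(G_n)=2$, $r(G_n)=1$, and two nonadjacent vertices with no common neighbour, so $\diam\Sigma(G_n)\ge3$. The main obstacle is the twisted wreath type claim above; the remaining steps are direct applications of earlier results.
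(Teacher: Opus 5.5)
Your proposal is correct and follows the paper's proof in its essentials: the same induction via Proposition~\ref{bookkeeping} and \eqref{eq:cyc} gives $r(H_n)=c_n\ge3$, and Proposition~\ref{sprop} is applied to $H_0$ with the pair $(1,w)$ from Lemma~\ref{twinput}, $n$ layers of $C_2$ and a final $S_{c_n}$. The only difference is the structural step. The paper applies \cite[Lemma~4.9]{F} to each wreath product to get primitivity and twisted wreath type at once. You instead collapse the tower to $H\wr P_n$, use Lemma~\ref{paprim}, and observe that the socle $N_0^{2^nc_n}$ is regular and nonabelian; this suffices, since in the types with two minimal normal subgroups the socle is not regular. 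The infinitude of the family, which you do not state explicitly, is immediate from the factor $2^n$ in the degree.
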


\begin{proof}
Applying \cite[Lemma~4.9]{F} to each of the wreath products in \eqref{eq:twtower} shows that every $H_n$ and every $G_n$ is primitive of twisted wreath type, and that $\operatorname{soc}(G_n)=N_0^{\,2^nc_n}\cong A_5^{\,6\cdot2^nc_n}$. As the socle of a group of twisted wreath type is regular, the degree of $G_n$ is the order of its socle, namely $60^{\,6\cdot2^nc_n}$.

By Lemma~\ref{twinput}, $b(H_0)=2$ and $r(H_0)=c_0\ge3$. Since $D(C_2)=2$, Proposition~\ref{bookkeeping} and \eqref{eq:cyc} show inductively that $b(H_n)=2$ and $r(H_{n+1})=(r(H_n)^2-r(H_n))/2=c_{n+1}$, so $r(H_n)=c_n\ge3$ for all $n$. Lemma~\ref{twinput} also supplies a pair satisfying the hypothesis of Proposition~\ref{sprop}. Applying that proposition to $H_0$, with the $n$ wreath products by $C_2$ and then $S_{c_n}$, gives $b(G_n)=2$, $r(G_n)=1$, and two nonadjacent vertices of $\Sigma(G_n)$ with no common neighbour. Hence $\diam\Sigma(G_n)\ge3$. Finally, $c_0>3$, so $c_{n+1}=c_n(c_n-1)/2>c_n$ and the degrees are strictly increasing.
\end{proof}

In contrast with the previous sections we make no exact diameter claim. Lemma~\ref{inherit} cannot help here. A regular orbital of $H$ has out-valency $|P|$, so if $\mathcal O\in\mathcal R(H)$ and $x\in N_0$, then at most $|P|^3<|N_0|$ points $y$ can satisfy $(x,u),(u,v),(v,y)\in\mathcal O$ for some $u,v\in N_0$. Thus the hypothesis of that lemma fails for reasons of size alone. We do not know the diameters of the graphs in Theorem~\ref{twfamilythm}.

\section{Counterexamples at every base size} \label{sec:everybase}

Let $d\ge3$ be odd, let $q=3^d$, and let $C$ be the subgroup of squares in $\F_q^\times$. Since $q\equiv3\pmod4$, the group $C$ has odd order and $-1\notin C$. Let the resulting Frobenius group
\[
H_q=\{x\mapsto ax+b \mid a\in C,\ b\in\F_q\}
\]
act on $\Omega_q=\F_q$, and let
\[
V_q=\Bigl\{v\in\F_2^{\Omega_q} \mid \sum_{\omega\in\Omega_q}v_\omega=0\Bigr\}
\]
be the deleted permutation module, of dimension $q-1$. Write $L_q=V_q{:}H_q$ for the resulting affine group in its natural action on $V_q$, and for $j\ge1$ let $r_j$ be the number of regular $H_q$-orbits on $V_q^j$.

\begin{lem}\label{genmodule}
The group $H_q$ acts faithfully and irreducibly on $V_q$, with at least two regular orbits. Moreover, for every $j\ge1$:
\begin{enumerate}
\item[{(i)}] $r_{j+1}\ge|V_q|\,r_j$, and in particular $r_j<r_{j+1}$;
\item[{(ii)}] $L_q$ has exactly $r_j$ regular orbits on $V_q^{j+1}$, matched with the regular $H_q$-orbits on $V_q^{j}$ by deleting a leading $\mathbf0$.
\end{enumerate}
\end{lem}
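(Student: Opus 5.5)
We would prove the four assertions in turn. The genuinely module-theoretic part is irreducibility. Faithfulness, the two regular orbits, and parts (i) and (ii) are counting and orbit bookkeeping.

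\emph{Faithfulness and irreducibility.} Since $q$ is odd, the all-ones vector $\mathbf1$ has coordinate sum $q=1\ne0$, so $\F_2^{\Omega_q}=V_q\oplus\langle\mathbf1\rangle$ as $H_q$-modules. The vectors $e_\omega+e_{\omega'}$ lie in $V_q$, so any element acting trivially on $V_q$ fixes every point of $\Omega_q$. Hence the action is faithful.

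For irreducibility we plan to restrict to the translation subgroup $T=\F_q^+\cong C_3^d$ and extend scalars to $\F_4$, which contains the cube roots of unity. Over $\F_4$ the permutation module of $T$ is the regular module, a sum of the $q$ linear characters of $T$. Removing $\langle\mathbf1\rangle$ removes the trivial character. Thus $V_q\otimes\F_4$ is the direct sum of the $q-1$ nontrivial characters $\psi_c\colon t\mapsto\zeta^{\operatorname{Tr}(ct)}$ with $c\in\F_q^\times$, each with multiplicity one. The group $C$ permutes these characters by $c\mapsto ac$, giving two orbits, the squares and the nonsquares. Any $H_q$-submodule of $V_q\otimes\F_4$ is therefore a sum of eigenspaces over a union of these orbits. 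The Frobenius of $\F_4/\F_2$ sends $\psi_c$ to $\psi_c^2=\psi_{-c}$. Since $-1\notin C$, it interchanges the two orbits. An $\F_2$-submodule of $V_q$ becomes, after tensoring with $\F_4$, a Frobenius-stable sum of such orbits. Being nonzero, it must therefore be everything, so $V_q$ is irreducible.

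\emph{At least two regular orbits.} We count even subsets of $\Omega_q$ with nontrivial stabiliser by a union bound over subgroups of prime order. Each such subgroup is either a translation subgroup of order $3$ or a subgroup of order $r\mid |C|$ with $r\ge3$. There are $(q-1)/2$ translation subgroups of order $3$. Each has all orbits of length $3$, so it fixes at most $2^{q/3}$ subsets. A subgroup of the second kind fixes one point and has all other orbits of length $r$, so it fixes at most $2^{1+(q-1)/3}$ subsets; there are at most $q\,|C|<q^2$ such subgroups. Hence the number of nonregular vectors is at most $q^2\,2^{q/3+1}$. For $q\ge27$ the quantity $2^{q-1}-q^2 2^{q/3+1}$ exceeds $2|H_q|=q(q-1)$, so there are at least two regular orbits. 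This is a routine inequality, checked at $q=27$ and then by monotonicity.

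\emph{Part (i).} Fix representatives $w$ of the $r_j$ regular orbits on $V_q^j$ and append an arbitrary $v\in V_q$. The tuple $(w,v)$ is regular. Suppose $(w,v)$ and $(w',v')$ lie in the same orbit. Then $w$ and $w'$ lie in the same orbit, so $w=w'$. The mapping element then fixes $w$, so it is trivial, and $v=v'$. This gives $r_{j+1}\ge|V_q|\,r_j$, and since $r_j\ge r_1\ge2>0$ the inequality $r_j<r_{j+1}$ is strict.

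\emph{Part (ii).} Every $L_q$-orbit on $V_q^{j+1}$, with $L_q$ acting diagonally, contains a tuple with leading entry $\mathbf0$, since we can translate by the negative of the first entry. Two tuples $(\mathbf0,w)$ and $(\mathbf0,w')$ lie in the same $L_q$-orbit exactly when an element fixing $\mathbf0$ maps one to the other, that is, an element of $H_q$; so they lie in the same orbit exactly when $w$ and $w'$ are in the same $H_q$-orbit. Moreover, $(L_q)_{(\mathbf0,w)}=(H_q)_w$. This gives a bijection between $L_q$-orbits on $V_q^{j+1}$ and $H_q$-orbits on $V_q^j$ that preserves regularity.

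The main obstacle is irreducibility. The paragraph above rests on two facts: the identification of the Frobenius action with $c\mapsto -c$, and the fact that $-1\notin C$ forces the two $C$-orbits to be fused.
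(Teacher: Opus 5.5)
Your proposal is correct and follows essentially the same route as the paper. For irreducibility you restrict to the translations, diagonalise over $\F_4$, and use that $C$ together with the Frobenius $c\mapsto -c$ (with $-1\notin C$) is transitive on the nontrivial characters; you prove $r_1\ge2$ by a fixed-point union bound; and you handle (i) and (ii) with the same append/translate bookkeeping. The differences are cosmetic: you descend from $\F_4$ via Frobenius-stability rather than working directly with the Galois-pair $\F_2N$-constituents, and your union bound runs over prime-order subgroups rather than over all nontrivial elements.
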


\begin{proof}
Since $q$ is odd, $\F_2^{\Omega_q}=V_q\oplus\langle\mathbf1\rangle$. The second summand is the trivial $H_q$-module, so the faithful action on $\F_2^{\Omega_q}$ restricts faithfully to $V_q$. Let $N=(\F_q,+)$ be the translation subgroup. The characters of $N$ take values in the cube roots of unity, so over $\F_4$ the restriction of $V_q$ to $N$ is the sum of the $q-1$ nontrivial character spaces, indexed by $u\in\F_q^\times$. The Galois pairs $\{u,-u\}$ give the pairwise nonisomorphic irreducible $\F_2N$-constituents. Each pair contains a unique square since $-1\notin C$, so $C$ acts regularly on these pairs. As $|N|$ is odd, the restriction to $N$ is semisimple and multiplicity-free. By Clifford's theorem, an $H_q$-submodule of $V_q$ is a sum of a $C$-invariant set of constituents. The transitivity of $C$ then leaves only $0$ and $V_q$.

A nonzero translation has $q/3$ orbits on $\Omega_q$, and an element with nontrivial multiplier has one fixed point together with orbits of length at least $3$. So every $h\ne1$ in $H_q$ has at most $(q+2)/3$ orbits, and hence fixes at most $2^{(q+2)/3}$ vectors. Therefore at most $|H_q|\,2^{(q+2)/3}$ vectors of $V_q$ are nonregular. As $q\ge27$ this is less than $|V_q|-|H_q|$, so $r_1\ge2$.

For (i), append each vector of $V_q$ in turn to a representative of a regular $H_q$-orbit on $V_q^j$. Each resulting tuple has trivial stabiliser. Two of them lie in the same orbit only if the appended vectors agree, and tuples extending distinct orbits lie in distinct orbits. For (ii), every $L_q$-orbit on $V_q^{j+1}$ contains a tuple with first entry $\mathbf0$. Two such tuples lie in the same $L_q$-orbit if and only if their final $j$ entries lie in the same $H_q$-orbit. Since $(L_q)_{\mathbf0}=H_q$, the stabiliser of such a tuple is the stabiliser in $H_q$ of its last $j$ entries. Hence one orbit is regular if and only if the other is.
\end{proof}

Fix $B\ge2$, let $t=B-1$ and $r=r_t$, and define
\begin{equation}\label{eq:genconstruction}
G_{B,d}=L_q\wr S_r
       =V_q^{\,r}{:}(H_q\wr S_r)
\end{equation}
in product action on $W=V_q^{\,r}$.

\begin{lem}\label{genbase}
We have $b(G_{B,d})=B$.
\end{lem}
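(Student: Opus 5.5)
Write $G=G_{B,d}$ and $t=B-1$. We show both $b(G)\le B$ and $b(G)\ge B$ by working with ordered tuples. The point stabiliser of $\mathbf0^r\in W$ is $H_q\wr S_r$. So a $B$-tuple $(x^{(0)},\dots,x^{(t)})$ of points of $W$ is a base exactly when its orbit under $G$ is regular. Translating by $-x^{(0)}$, we may assume $x^{(0)}=\mathbf0^r$. We then ask when the remaining $t$-tuple has trivial stabiliser in $H_q\wr S_r$.

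We read the tuple coordinatewise. For each $j\in\{1,\dots,r\}$, let $\xi_j=(x^{(0)}_j,\dots,x^{(t)}_j)\in V_q^{B}$. An element $(h_1,\dots,h_r)\sigma$ of $L_q\wr S_r$ fixes the tuple only if $\sigma$ permutes coordinates with $\xi_j$ and $\xi_{j^\sigma}$ in the same $L_q$-orbit. The analogue of Lemma~\ref{basecrit} for $B$-tuples then says the following. The tuple is a base for $G$ if and only if each $\xi_j$ lies in a regular $L_q$-orbit on $V_q^{B}$, and the partition of $\{1,\dots,r\}$ by these orbits is distinguishing for $S_r$, that is, consists of singletons. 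The proof of Lemma~\ref{basecrit} carries over verbatim: the kernel of the stabiliser onto $S_r$ is the product of the coordinate stabilisers, and the image is the setwise stabiliser of the partition.

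For the upper bound, Lemma~\ref{genmodule}(ii) with $j=t$ says $L_q$ has exactly $r=r_t$ regular orbits on $V_q^{t+1}=V_q^B$. Choose $\xi_1,\dots,\xi_r$, one in each regular orbit. The resulting $B$-tuple is then a base of $G$, so $b(G)\le B$.

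For the lower bound, suppose a $t$-tuple of points of $W$ were a base. The same criterion applied to $t$-tuples shows that the coordinate tuples $\xi_j\in V_q^{t}$ would have to lie in $r$ pairwise distinct regular $L_q$-orbits on $V_q^{t}$. When $t\ge2$, Lemma~\ref{genmodule}(ii) with $j=t-1$ gives exactly $r_{t-1}$ such orbits, and Lemma~\ref{genmodule}(i) gives $r_{t-1}<r_t=r$, a contradiction. When $t=1$ ($B=2$), a single point is never a base because $L_q$ is nonregular, so $b(G)\ge2$ anyway. Hence $b(G)\ge B$, and so $b(G)=B$.

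The main obstacle is justifying the extension of Lemma~\ref{basecrit} and Proposition~\ref{bookkeeping} from pairs to $k$-tuples. The argument is the same, but it has to be said explicitly. One must check two things: the pointwise stabiliser of the tuple projects onto the setwise stabiliser in $S_r$ of the orbit partition, and its kernel is $\prod_j (L_q)_{\xi_j}$. The first point needs care in the direction "distinct orbits imply the projection to $S_r$ is trivial": a permutation $\sigma$ can lift to the stabiliser only if $\xi_{j}$ and $\xi_{j^\sigma}$ lie in the same $L_q$-orbit.
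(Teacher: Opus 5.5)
Your proof is correct and follows the same route as the paper. Both arguments compare the number of regular $L_q$-orbits on $V_q^{k}$, which is $r_{k-1}$ by Lemma~\ref{genmodule}(ii), against $D(S_r)=r=r_t$, and use the strict monotonicity in Lemma~\ref{genmodule}(i). The only difference is that the paper cites \cite[Theorem~6.1]{FHLR} for the $k$-tuple version of Lemma~\ref{basecrit}, whereas you re-derive it. You should also add the one-line remark that a base of size less than $t$ can be padded to a $t$-tuple base, so ruling out $t$-tuples suffices.
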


\begin{proof}
By Lemma~\ref{genmodule}, $L_q$ is primitive and nonregular and $r=r_t\ge2$, so Lemma~\ref{paprim} shows that $G_{B,d}$ is primitive in product action. By Lemma~\ref{genmodule}(ii) the number of regular $L_q$-orbits on $V_q^{k}$ is $r_{k-1}$ for $k\ge2$, and the $r_j$ are strictly increasing by Lemma~\ref{genmodule}(i). Since $D(S_r)=r=r_t$, the least $k\ge2$ with $r_{k-1}\ge r$ is $k=t+1$. As $G_{B,d}$ is nonregular we have $b(G_{B,d})\ge2$, so \cite[Theorem~6.1]{FHLR} gives $b(G_{B,d})=t+1=B$.
\end{proof}

Let $M=H_q\wr S_r$ be the stabiliser of $\mathbf0$ in $G_{B,d}$, and let $A\subseteq W$ be the set of vectors occurring in bases of size $t$ for $M$. A base of size $B$ for $G_{B,d}$ containing $\mathbf0$ consists of $\mathbf0$ together with a base of size $t$ for $M$. Since $b(G_{B,d})=B$ by Lemma~\ref{genbase}, the set $A$ is the neighbourhood of $\mathbf0$ in the generalised Saxl graph. By \cite[Lemma~5.1]{FHLR}, for $v\in W\setminus\{\mathbf0\}$ the vertices $\mathbf0$ and $v$ have a common neighbour if and only if $v\in A+A$.

Fix $\omega_0\in\Omega_q$, and let $u\in V_q$ be the characteristic vector of $\Omega_q\setminus\{\omega_0\}$. Thus $u_{\omega_0}=0$ and $u_\omega=1$ for $\omega\ne\omega_0$. Set $z=(u,\mathbf0,\dots,\mathbf0)\in W$.

\begin{lem}\label{genobstruction}
We have $z\notin A\cup(A+A)$.
\end{lem}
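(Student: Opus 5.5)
The plan is to translate membership in $A$ into the coordinate description of bases for $M=H_q\wr S_r$ given by Lemma~\ref{basecrit} and Proposition~\ref{bookkeeping}, applied coordinatewise to $t$-tuples. Concretely, a $t$-tuple $(x^1,\dots,x^t)$ of vectors of $W=V_q^{\,r}$ is a base for $M$ if and only if two conditions hold. First, for each coordinate $k\in\{1,\dots,r\}$ the column $(x^1_k,\dots,x^t_k)\in V_q^t$ lies in a regular $H_q$-orbit. Second, the orbits of the $r$ columns form a distinguishing partition for $S_r$, which, since $D(S_r)=r$, means they are pairwise distinct. Since $r=r_t$ is exactly the number of regular $H_q$-orbits on $V_q^t$, the columns of any base of size $t$ list every regular $H_q$-orbit on $V_q^t$ exactly once. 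Because bases are sets, a vector $x\in A$ may be placed as the first member $x^1$. The key invariant is then the following. For each $H_q$-orbit $P$ on $V_q$, let $n(P)$ be the number of regular $H_q$-orbits on $V_q^t$ whose first entries lie in $P$. For every $x\in A$, the number of coordinates $k$ with $x_k\in P$ equals $n(P)$.

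To show $z\notin A$, I will use this invariant with $P$ a regular $H_q$-orbit on $V_q$, which exists by Lemma~\ref{genmodule}. Extending a vector of $P$ by arbitrary further entries gives regular $t$-tuples, so $n(P)\ge1$. On the other hand, the entries of $z$ are $u$ and $\mathbf0$. Neither is regular: $H_q$ fixes $\mathbf0$, and the nontrivial group $C$ of multipliers $x\mapsto ax$ about the point $\omega_0$ (conjugated appropriately) fixes $\omega_0$ and hence fixes $u$. Hence no coordinate of $z$ lies in $P$, which is a contradiction.

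For $z\notin A+A$, suppose that $z=x+y$ with $x,y\in A$. Since we are in characteristic $2$, we get $x_k=y_k$ for $k\ge2$ and $x_1+y_1=u$. Applying the invariant to both $x$ and $y$ and comparing coordinate counts orbit by orbit, the coordinates $k\ge2$ contribute identically. Hence $x_1$ and $y_1$ lie in the same $H_q$-orbit, so $y_1=x_1^h$ for some $h\in H_q$. It therefore suffices to prove that $v+v^h\ne u$ for all $v\in V_q$ and $h\in H_q$.

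I expect this last step to be the crux, and I would handle it by a parity argument on the cycles of $h$ on $\Omega_q$. If $h=1$ then $v+v^h=\mathbf0\ne u$. Otherwise, on each cycle of $h$ the entries of $v+v^h$ sum to $0$ in $\F_2$. By contrast, on a cycle of odd length not containing $\omega_0$ the entries of $u$ sum to $1$. Such a cycle always exists. A nonzero translation has $q/3\ge9$ cycles, each of length $3$. An element with nontrivial multiplier $a\in C$ has one fixed point and otherwise cycles of length equal to the order of $a$, which is odd because $|C|$ is odd; since $q\ge27$ there are several such cycles, and at most one of them contains $\omega_0$. This completes the proof that $z\notin A\cup(A+A)$.
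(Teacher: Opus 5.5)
Your proof is correct. Its backbone matches the paper's: the same orbit-count invariant on $A$, and the same parity argument on the cycles of $h$ showing $u\notin(1+h)V_q$. The only difference there is that you split off $h=1$, which the paper absorbs by choosing a fixed point of the identity other than $\omega_0$.

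Two steps are routed differently.

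First, the column description. The paper adjoins $\mathbf0$ to get a base of size $B$ for $G_{B,d}$, applies \cite[Lemma~6.2]{FHLR} to the $B\times r$ array, and strips the leading zero via Lemma~\ref{genmodule}(ii). You read the description off directly from $M=H_q\wr S_r$. Your criterion is right, but Lemma~\ref{basecrit} and Proposition~\ref{bookkeeping} are stated for a transitive $X$ and two-point bases, so they do not literally cover $H_q$ acting intransitively on $V_q$ with $t$-point bases. Justify it instead by the one-line check that $(h_1,\dots,h_r)\sigma$ fixes every row exactly when each $h_k$ carries the $k$th column to the column in position $k^\sigma$.

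Second, the proof that $z\notin A$. The paper uses the orbit $\{\mathbf0\}$: every element of $A$ would have a single nonzero coordinate. Then the other $t$ rows of the array have their nonzero entries in at most $t<r$ columns, so some column is entirely zero and hence not regular. This needs $r_t\ge t+1$ from Lemma~\ref{genmodule}(i). You instead count coordinates in a regular $H_q$-orbit $P$ on $V_q$: every element of $A$ has $n(P)\ge1$ such coordinates, while $z$ has none, since $\mathbf0$ is fixed by $H_q$ and the stabiliser of $\omega_0$ fixes $u$.

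Your version is shorter and needs only the existence of one regular orbit on $V_q$, not the growth of the $r_j$. The paper's version makes no use of the particular vector $u$ and excludes from $A$ every vector with exactly $r-1$ zero coordinates.
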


\begin{proof}
Let $x\in A$ and choose a base of size $t$ for $M$ containing $x$. Together with $\mathbf0$ this is a base of size $B$ for $G_{B,d}$, which we display as a $B\times r$ array whose first row is $\mathbf0$ and whose second is $x$.

By \cite[Lemma~6.2]{FHLR}, the columns represent all $r$ regular $L_q$-orbits on $V_q^B$ exactly once. Each column begins with $\mathbf0$, so by Lemma~\ref{genmodule}(ii) deleting that entry matches its orbit with a regular $H_q$-orbit on $V_q^t$, and every such orbit arises exactly once. Now $x_i$ is the first entry of the $i$th truncated column, and a regular $H_q$-orbit on $V_q^t$ determines the $H_q$-orbit of its first entry. Hence, for every $H_q$-orbit $O$ on $V_q$, the number of coordinates $i$ with $x_i\in O$ does not depend on $x$.

If $z\in A$, then as $z$ has $r-1$ coordinates equal to $\mathbf0$, so does every element of $A$. The rows of the array other than the first all lie in $A$, so their nonzero entries occupy at most $t$ columns. Since $t\ge1$, $r_1\ge2$, and the $r_j$ are strictly increasing by Lemma~\ref{genmodule}, we have $r=r_t\ge t+1$, so some column has all zero entries. Deleting its leading $\mathbf0$ leaves the zero tuple of $V_q^t$, whose $H_q$-orbit is not regular. This is a contradiction, so $z\notin A$.

Suppose that $z=x+y$ with $x,y\in A$. Then $x_i=y_i$ for every $i>1$, while $y_1=x_1+u$. Since $x$ and $y$ have the same number of coordinates in each $H_q$-orbit, cancelling the common orbit labels in coordinates $i>1$ shows that $x_1$ and $y_1$ lie in the same $H_q$-orbit, say $y_1=x_1^h$ with $h\in H_q$. Thus $u=x_1+y_1=(1+h)x_1$ lies in $(1+h)V_q$.

For an orbit $\Lambda$ of $\langle h\rangle$ on $\Omega_q$, let $\lambda_\Lambda(v)=\sum_{\omega\in\Lambda}v_\omega$. Since $h$ permutes $\Lambda$, the functional $\lambda_\Lambda$ annihilates $(1+h)V_q$. As $H_q$ has odd order, every $\langle h\rangle$-orbit has odd length. Moreover $h$ has at least two orbits: the identity has $q$, a nonzero translation has $q/3$, and an element with nontrivial multiplier has a unique fixed point together with at least one further orbit. Choose an orbit $\Lambda$ avoiding $\omega_0$. Then
\[
\lambda_\Lambda(u)=|\Lambda|\equiv1\pmod2,
\]
contradicting $u\in(1+h)V_q$.
\end{proof}

\begin{thm}\label{genthm}
For every $B\ge2$ and every odd $d\ge3$, the group $G_{B,d}$ is a primitive affine group of degree $2^{(q-1)r_t}$ with $b(G_{B,d})=B$, where $q=3^d$ and $t=B-1$. The vertices $\mathbf0$ and $z$ of its generalised Saxl graph are nonadjacent and have no common neighbour. In particular, for each fixed $B$ the groups $G_{B,d}$ form an infinite family of counterexamples to \cite[Conjecture~1.2]{FHLR}.
\end{thm}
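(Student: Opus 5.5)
The theorem mostly assembles results already proved in this section, so I will collect them in order.

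First, primitivity and type. By Lemma~\ref{genmodule}, $H_q$ is faithful and irreducible on $V_q$, so $L_q=V_q{:}H_q$ is a primitive affine group. It is nonregular because $H_q\ne1$. Since $r=r_t\ge r_1\ge2$ and $S_r$ is transitive, Lemma~\ref{paprim} shows that $G_{B,d}=L_q\wr S_r$ is primitive on $W=V_q^{\,r}$. The subgroup $V_q^{\,r}$ is a regular elementary abelian normal subgroup, so $G_{B,d}$ is affine. Its degree is $|W|=2^{(q-1)r}=2^{(q-1)r_t}$, because $\dim V_q=q-1$. The base size $b(G_{B,d})=B$ is Lemma~\ref{genbase}.

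Next, the graph statement. As recorded before Lemma~\ref{genobstruction}, the neighbourhood of $\mathbf0$ in the generalised Saxl graph is $A$. By \cite[Lemma~5.1]{FHLR}, a nonzero $v\in W$ has a common neighbour with $\mathbf0$ if and only if $v\in A+A$. The vector $z$ is nonzero because $u\ne\mathbf0$. Lemma~\ref{genobstruction} gives $z\notin A$, so $\mathbf0$ and $z$ are nonadjacent, and $z\notin A+A$, so they have no common neighbour. One small point needs care: adjacency of $\mathbf0$ and $z$ means they lie in a common base of size $B$, and such a base is $\mathbf0$ together with a size-$t$ base for $M=(G_{B,d})_{\mathbf0}$. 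Hence adjacency is exactly the condition $z\in A$, as already noted in the text.

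Finally, the infinite family. Fix $B$ and let $d$ range over the odd integers at least $3$. Then $q=3^d$ grows, and since $r_t\ge1$, the degree $2^{(q-1)r_t}\ge2^{q-1}$ is unbounded. This gives infinitely many pairwise non-isomorphic examples, and proves Theorem~\ref{mainthm} as well.

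No step is a genuine obstacle, since the substantive work is in Lemmas~\ref{genmodule}--\ref{genobstruction}. The only point to state explicitly is the hypothesis of \cite[Lemma~5.1]{FHLR}, namely that $G_{B,d}$ is primitive with $b(G_{B,d})\ge2$, which was established above.
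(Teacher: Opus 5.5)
Your proposal is correct and follows the paper's proof. You use the same ingredients in the same order: primitivity via Lemma~\ref{genmodule} and Lemma~\ref{paprim}, the base size from Lemma~\ref{genbase}, the graph statement from Lemma~\ref{genobstruction} together with \cite[Lemma~5.1]{FHLR}, and unboundedness of $2^{(q-1)r_t}\ge2^{q-1}$ for the infinite family. Your remark that $z\ne\mathbf0$, which is needed to invoke \cite[Lemma~5.1]{FHLR}, spells out a detail the paper leaves implicit.
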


\begin{proof}
By Lemma~\ref{genmodule}, $H_q$ acts faithfully and irreducibly on $V_q$, so $L_q$ is primitive and nonregular. Since $t\ge1$, Lemma~\ref{genmodule} gives $r=r_t\ge r_1\ge2$, and $S_r$ is transitive, so Lemma~\ref{paprim} shows that $G_{B,d}=L_q\wr S_r$ is primitive in product action. The decomposition in \eqref{eq:genconstruction} exhibits $V_q^{\,r}$ as a regular elementary abelian normal subgroup, so $G_{B,d}$ is affine, of degree $|V_q|^{\,r}=2^{(q-1)r_t}$.

Lemma~\ref{genbase} gives $b(G_{B,d})=B$. By Lemma~\ref{genobstruction}, $z\notin A\cup(A+A)$, so $\mathbf0$ and $z$ are nonadjacent and have no common neighbour. Finally, $2^{(q-1)r_t}\ge2^{q-1}$ is unbounded as $d$ ranges over the odd integers at least $3$, so for fixed $B$ infinitely many of the $G_{B,d}$ are pairwise nonisomorphic.
\end{proof}

\section{Computational results}\label{sec:searches}

The uniform construction of Section~\ref{sec:everybase} is independent of the searches in this section. The other counterexamples in this paper emerged from a systematic search. Table~\ref{tab:ranges} summarises the ranges that have been exhausted, and Table~\ref{tab:counterexamples} the counterexamples they contain.

Each of the four counterexamples is an affine group $V{:}H$ whose point stabiliser $H$ is a monomial subgroup of $\GL(V)$. In each case the regular vectors form a single orbit and $V_{\mathrm{reg}}\subseteq2V_{\mathrm{reg}}$. The column $|V\setminus2V_{\mathrm{reg}}|$ of Table~\ref{tab:counterexamples} therefore also counts the vectors outside $V_{\mathrm{reg}}\cup2V_{\mathrm{reg}}$. This number is positive and less than $|H|$, so Lemmas~\ref{criterion} and~\ref{triplesum} give $\diam\Sigma(V{:}H)=3$. Here $|D_{18}|=18$ and $|12T35|=|12T38|=72$ with notation as in \cite{GAP}.

\begin{table}[!htbp]
\centering
\begin{tabular}{llc}
\hline
primitive groups & degrees exhausted & counterexamples\\
\hline
all & $n<8192$ & $0$\\
affine, $H$ soluble & $n < 2^{24}$ & $3$\\
affine, $H$ insoluble & $n < 2^{18}$ & $1$\\
affine, $H$ almost quasisimple of sporadic type & all $n$ & $0$\\
non-affine & $n\le10^{8}$ & $0$\\
diagonal & $n\le10^{24}$ & $0$\\
\hline
\end{tabular}
\smallskip
\caption{The exhausted ranges.}
\label{tab:ranges}
\end{table}

\begin{table}[!htbp]
\centering
\begin{tabular}{llrrc}
\hline
$V$ & $H$ & $|V_{\mathrm{reg}}|$ &
$|V\setminus2V_{\mathrm{reg}}|$ & $\diam\Sigma$\\
\hline
$\F_3^{9}$  & $C_2^6{:}D_{18}$    & $1152$    & $96$   & $3$\\
$\F_3^{10}$ & $C_2^5{:}S_5$       & $3840$    & $64$   & $3$\\
$\F_3^{12}$ & $C_2^8{:}(12T35)$   & $18\,432$ & $1600$ & $3$\\
$\F_3^{12}$ & $C_2^8{:}(12T38)$   & $18\,432$ & $1600$ & $3$\\
\hline
\end{tabular}
\smallskip
\caption{The four counterexamples found in the affine searches; see \cite{AlunaAdam}.}
\label{tab:counterexamples}
\end{table}

\subsection{Methods}

Groups are constructed in GAP \cite{GAP} and Magma \cite{Magma}; the graph computations use our own C++ code. Source code for the computations described in this section is available in \cite{AlunaAdam}.

Three observations restrict which groups can occur as counterexamples.
\begin{enumerate}
\item[{(a)}] A base-two group of degree $n$ has order at most $n(n-1)$; an affine stabiliser with a regular orbit has $|H|\le|V|-1$.
\item[{(b)}] If $\Sigma(G)$ has valency greater than $n/2$ then any two neighbourhoods meet, so $G$ has the common neighbour property; for affine groups, $|V_{\mathrm{reg}}|>|V|/2$ forces $2V_{\mathrm{reg}}=V$.
\item[{(c)}] If $G\le N\le\Symgrp(\Omega)$ with $b(G)=b(N)=2$, then $\Sigma(N)$ is a spanning subgraph of $\Sigma(G)$, so the common neighbour property passes from $N$ to $G$; for affine groups $H\le K\le\GL(V)$ gives $V_{\mathrm{reg}}(K)\subseteq V_{\mathrm{reg}}(H)$, so $2V_{\mathrm{reg}}(K)=V$ forces $2V_{\mathrm{reg}}(H)=V$.
\end{enumerate}
By (c), one overgroup with the common neighbour property settles every base-two subgroup below it, so the search descends only through maximal candidates and few groups reach a graph computation.

The primitive groups of degree less than $8192$ are classified in \cite{CQRD} and \cite{St}, and the resulting library is available in GAP and Magma. Burness and Giudici verified their conjecture below degree $4096$ \cite[Section~4]{BG}. We reproduced that computation independently and extended it to the whole library, finding no counterexample.

\subsection{Affine groups}

Given generators for an irreducible $H\le\GL_d(p)$, we enumerate the $H$-orbits on $V$ and test $2V_{\mathrm{reg}}=V$. When $2V_{\mathrm{reg}}\ne V$, a breadth-first search computes the diameter of $\Sigma(G)$.

For soluble stabilisers, H\"ofling's IRREDSOL library \cite{Hof} contains, up to conjugacy, every irreducible soluble subgroup of $\GL_d(q)$ with $q^d\le2^{24}-1$. Using (c), we exhausted the library in the remaining range $8192\le q^d\le2^{24}-1$, finding precisely the three soluble examples in Table~\ref{tab:counterexamples}.

For insoluble stabilisers in dimensions $d=2$ and $3$, Magma's function \texttt{IrreducibleSubgroups} returns a complete set of representatives of the conjugacy classes of insoluble irreducible subgroups of $\GL_d(p)$. For $d\ge4$ we descend from the maximal insoluble irreducible subgroups, for which Magma's \texttt{ClassicalMaximals} provides complete lists when $d\le17$. Since $p^d<2^{18}$ in our range, this covers every dimension that occurs.

\begin{prop}\label{prop:sporadic}
Conjecture~\ref{bgconj} holds for every primitive affine group whose point stabiliser is almost quasisimple of sporadic type.
\end{prop}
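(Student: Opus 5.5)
The proof reduces to the ten cases left open by Lee and Popiel \cite{LP}. They proved Conjecture~\ref{bgconj} for all primitive affine groups $G=V{:}H$ with $H$ almost quasisimple of sporadic type, except ten explicit pairs $(H,V)$. So it suffices to treat those ten. For each, by Lemma~\ref{criterion}(iii), the task is to show $2V_{\mathrm{reg}}=V$, where $V_{\mathrm{reg}}=V_{\mathrm{reg}}(H)$.

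\textbf{Step 1: construct the pairs.} For each exceptional case I will build $H\le\GL(V)$ explicitly in Magma, from the relevant irreducible representation of the covering group of the sporadic group, extended by the appropriate scalars or outer automorphisms. These are the cases in which the rough counting arguments of \cite{LP} were inconclusive.

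\textbf{Step 2: apply the cheap reductions first.}
\begin{itemize}
\item By observation (c), it is enough to verify $2V_{\mathrm{reg}}(K)=V$ for a maximal overgroup $K\le N_{\GL(V)}(H)$ that still has a regular orbit. Replacing $H$ by $K$ settles all intermediate groups at once.
\item By observation (b), if a random-sampling estimate shows $|V_{\mathrm{reg}}|>|V|/2$, the case is finished without further work. This needs a rigorous count of the nonregular vectors. I would get it from fixed-point space dimensions: at most $\sum_x |C_V(x)|$ vectors are nonregular, summing over class representatives $x$ of prime order with their class sizes. This is the same bound that \cite{LP} used, now refined with exact fixed-point dimensions.
\end{itemize}

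\textbf{Step 3: exhibit the sumset directly.} For the cases that survive Step 2, I will show $2V_{\mathrm{reg}}=V$ explicitly. Since $V_{\mathrm{reg}}$ and the conditions are $H$-invariant, it suffices to work one $H$-orbit at a time. Let $w$ run over representatives of the $H$-orbits on $V$; these come from the orbit enumeration when $|V|$ is small enough, and otherwise I use random search. For each $w$, find $a\in V_{\mathrm{reg}}$ with $w-a\in V_{\mathrm{reg}}$. Random choices of $a$ should succeed quickly, because $V_{\mathrm{reg}}$ is dense. Regularity of a given vector is checked by computing its stabiliser. The zero vector is covered by $a+(-a)$, since $V_{\mathrm{reg}}=-V_{\mathrm{reg}}$.

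\textbf{Main obstacle.} I expect the difficulty to be the large-dimensional cases, where $V$ is too big to enumerate orbits (for example, modules for large sporadics in dimension in the hundreds). There I would avoid full orbit enumeration. Instead, I would bound the number of vectors lying in non-regular orbits, and also the number of orbits, using character-theoretic fixed-point counts together with the Cauchy--Frobenius lemma. This reduces the problem to a manageable list of small orbits: those of vectors with nontrivial stabiliser. These can be found as fixed vectors $C_V(x)$ of prime-order elements $x$. I would then treat each such orbit, plus a representative sample of the large orbits, by the witness search of Step 3. Every remaining vector $w$ can then be handled by a pigeonhole argument like Lemma~\ref{triplesum}: if $|V\setminus V_{\mathrm{reg}}|<|V|/2$, then $w-V_{\mathrm{reg}}$ meets $V_{\mathrm{reg}}$.
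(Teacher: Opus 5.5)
\textbf{There is a genuine gap in the hard cases.} Your reduction to the ten open rows of \cite[Table~1]{LP} agrees with the paper. So does your use of observation (c) to pass to the largest scalar extension, and direct computational verification of $2V_{\mathrm{reg}}=V$ where that is feasible (the paper does this for six of the rows). The gap is in the cases where $V$ is too large to enumerate and $|V_{\mathrm{reg}}|\le|V|/2$. There, random sampling of orbit representatives certifies nothing, because $2V_{\mathrm{reg}}=V$ must hold on every orbit, not on a sample. Your closing pigeonhole step assumes $|V\setminus V_{\mathrm{reg}}|<|V|/2$. That is exactly the hypothesis of observation (b), and if it held, Step~2 would already have finished the case. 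So the only cases that reach your final step are those where it gives nothing, and the proposal has no valid concluding argument for them.

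\textbf{The missing idea.} $V_{\mathrm{reg}}$ is closed under multiplication by $\F_q^\times$, since scalars are central in $\GL_d(q)$. It is therefore a union of punctured lines, and for such sets the density needed for $A+A=V$ drops from about $1/2$ to about $1/q$. The paper uses \cite[Theorem~2]{DMP}: if $q>2$ and $A\subseteq\F_q^d\setminus\{\mathbf0\}$ is scalar-closed with $|A|>q^{d-1}-1$, then $A+A=\F_q^d$. This is a projective pigeonhole argument.
\begin{itemize}
\item The set $A$ contains more points of $\mathrm{PG}(d-1,q)$ than there are lines through any given point $\langle v\rangle$.
\item If $\langle v\rangle\not\subseteq A$, then two points $\langle a\rangle\ne\langle b\rangle$ of $A$ lie on a common line through $\langle v\rangle$. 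This gives $v=\lambda a+\mu b$ with $\lambda a,\mu b\in A$.
\item If $\langle v\rangle\subseteq A\cup\{\mathbf0\}$, write $v=cv+(1-c)v$ with $c\ne0,1$, which needs $q>2$.
\end{itemize}
The paper closes four of the ten rows this way, where $q>2$ and a computed bound $|V_{\mathrm{reg}}|>q^{d-1}-1$ suffices. Your fixed-point count from Step~2 is the right kind of input for that bound, but you need this lemma to turn it into a proof. The alternative would be a complete, not sampled, list of all nonregular orbits, each given an explicit witness. Regular $w$ are then automatic when $q>2$, via $w=cw+(1-c)w$.
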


\begin{proof}
Write the group as $V{:}H$, where $V=\F_q^d$ and $H\le\GL_d(q)$. Theorem~1.2 of \cite{LP} establishes this apart from the ten rows of \cite[Table~1]{LP}, some of which represent several groups, according to the choice of a scalar subgroup $Z\le H$. For those rows the computations in \cite{AlunaAdam} either handle each $Z$ in turn, or handle the largest such $Z$ and then apply observation (c) above. For every base-two group arising in six of the rows, the computations verify that $2V_{\mathrm{reg}}=V$. For every group arising in the remaining four rows, $q>2$ and the computations verify that $|V_{\mathrm{reg}}|>q^{d-1}-1$. Since scalar matrices are central in $\GL_d(q)$, the set $V_{\mathrm{reg}}$ is closed under multiplication by $\F_q^\times$. Moreover $\mathbf0\notin V_{\mathrm{reg}}$, as $H\ne1$. An immediate consequence of \cite[Theorem~2]{DMP} is that $A+A=\F_q^d$ whenever $A\subseteq\F_q^d\setminus\{\mathbf0\}$ is closed under multiplication by $\F_q^\times$ with $|A|>q^{d-1}-1$ and $q > 2$. Applying this with $A=V_{\mathrm{reg}}$ gives $2V_{\mathrm{reg}}=V$ in these cases too, and Lemma~\ref{criterion}(iii) gives the common neighbour property in every case.
\end{proof}

\subsection{Non-affine groups}

Above degree $8191$ there is no library of non-affine primitive groups. Following \cite{St} we find the non-affine primitive groups of degree at most $10^8$ with base size two.

For a diagonal type group $G$ with $\operatorname{soc}(G)=T^k$, we use Huang's classification of base-two diagonal type groups \cite[Theorem~1]{Hu}. If the top group is neither $A_k$ nor $S_k$, then \cite[Theorem~5.6]{HuT} gives the common neighbour property. If it is $A_k$ or $S_k$, then $b(G)=2$ forces $2<k<|T|$ by \cite[Corollary~2.4]{Hu}, and since $|T|^{k-1}\le 10^{24}$ and $|T|\ge60$ we have $k\le14 \le |T|-3$. The full normaliser $N=T^k.(\operatorname{Out}(T)\times S_k)$ has $b(N)=2$ by \cite[Theorem~1(ii)]{Hu}. In all cases we check that $N$ has the common neighbour property, which implies the result for $G$. To do this, in most cases we prove that any vertex of $\Sigma(N)$ is adjacent to more than half of the other vertices. That leaves the seven cases with $T=A_5$ and $8\le k\le14$. For these, we combine Huang's characterisation of bases \cite[Lemmas~2.15 and~2.16]{Hu} with a counting argument on $k$-subsets of $A_5$ to prove that any two vertices have a common neighbour.

\pagebreak

\section*{Acknowledgements}
The authors would like to thank the Isaac Newton Institute for Mathematical Sciences, Cambridge, for support and hospitality during the programme Algebraic groups, geometry, invariants and related topics, where work on this paper was undertaken. Rizzoli also acknowledges support from the Additional Funding Programme for Mathematical Sciences, delivered by EPSRC (EP/V521917/1), and the Heilbronn Institute for Mathematical Research.

For the purpose of open access, the authors have applied a Creative Commons Attribution (CC BY) licence to any Author Accepted Manuscript version arising from this submission.

\section*{Declaration of generative AI and AI-assisted technologies}
This project began when we set out, with support from Codex, to prove the common neighbour conjecture for soluble affine groups and formalise the proof in Lean. A computational search of the IRREDSOL library, which we ran as a falsification test, instead produced counterexamples, and we redirected the project towards constructing, understanding and generalising them. We then worked with Codex, ChatGPT Pro and Claude to discover further examples and constructions, search the literature, and draft and revise the manuscript.

We chose the libraries, designed the search strategies and specified the key techniques. We diagnosed bottlenecks and redirected or terminated unpromising searches. Codex implemented and debugged much of the Magma, GAP, Python and C++ code. The Lean formalisation was produced primarily by Codex; we reviewed its theorem statements and verified the completed formalisation. 

We reviewed all AI-assisted output, verified every proof and computation reported here, and take full responsibility for the content.

\end{document}